\newtheorem{theorem}{Theorem}[section]
\newtheorem{lemma}[theorem]{Lemma}
\newtheorem{definition}[theorem]{Definition}
\newtheorem{proposition}[theorem]{Proposition}
\newtheorem{remark}[theorem]{Remark}
\newtheorem{example}[theorem]{Example}
\newcommand{\TKK}{{\mathrm{Ko}}}
\newcommand{\TT}{{\mathrm{Ti}}}
\newcommand{\tr}{{\mathrm{tr}}}
\newcommand{\eins}{\leavevmode\hbox{\small1\kern-3.8pt\normalsize1}}
\newcommand{\Der}{{\mathrm{Der}}}
\newcommand{\Kan}{{\mathrm{Kan}}}
\newcommand{\mR}{\mathbb{R}}
\newcommand{\mC}{\mathbb{C}}
\newcommand{\mZ}{\mathbb{Z}}
\newcommand{\mD}{\mathbb{D}}
\newcommand{\mg}{\mathfrak{g}}
\newcommand{\fg}{{\mathfrak g}}
\newcommand{\fh}{{\mathfrak h}}
\newcommand{\str}{\mathfrak{str}}
\newcommand{\istr}{\mathfrak{istr}}
\newcommand{\Inn}{{\rm Inn}}
\newcommand{\cD}{\mathcal{D}}
\newcommand{\Span}{\mathrm{Span}}
\newcommand{\cJ}{\mathcal{J}}
\newcommand{\cL}{\mathcal{L}}
\newcommand{\mK}{\mathbb{K}}
\newcommand{\oa}{\bar{0}}
\newcommand{\ob}{\bar{1}}
\newcommand{\End}{{\rm End}}
\newcommand{\id}{{\rm id}}
\newcommand{\ad}{{\rm ad}}
\newcommand{\Out}{{\rm Out}}
\DeclarePairedDelimiter\abs{\lvert}{\rvert}%
\DeclarePairedDelimiter\norm{\lVert}{\rVert}%
\let\oldabs\abs
\def\abs{\@ifstar{\oldabs}{\oldabs*}}
\let\oldnorm\norm
\def\norm{\@ifstar{\oldnorm}{\oldnorm*}}
\newcommand{\minus}{\scalebox{0.9}{{\rm -}}}
\newcommand{\plus}{\scalebox{0.6}{{\rm+}}}
\begin{document}

\title[Structure and TKK algebras]{On structure and TKK algebras for Jordan superalgebras}

\author{Sigiswald Barbier and Kevin Coulembier}
\date{}

\begin{abstract}
We compare a number of different definitions of structure algebras and TKK constructions for Jordan (super)algebras appearing in the literature. We demonstrate that, for unital superalgebras, all the definitions of the structure algebra and the TKK constructions fall apart into two cases. Moreover, one can be obtained as the Lie superalgebra of superderivations of the other. We also show that, for non-unital superalgebras, more definitions become non-equivalent. As an application, we obtain the corresponding Lie superalgebras for all simple finite dimensional Jordan superalgebras over an algebraically closed field of characteristic zero.
\end{abstract}

\maketitle





\section{Introduction}
There is an acclaimed principle that associates a 3-graded Lie algebra to a Jordan algebra, as developed by Tits, Kantor and Koecher in three variations, see \cite{Tits, Kantor, Koecher}. These three constructions have natural analogues for Jordan superalgebras and some also extend to Jordan (super)pairs. 
The  principle behind these three constructions, and further variations appearing in the literature, is loosely referred to as ``the'' TKK construction.

 A common feature of TKK constructions is that, under the appropriate conditions, they associate {\em simple} Lie superalgebras to {\em simple} Jordan superalgebras or superpairs. They were as such used to classify simple Jordan superalgebras and superpairs, see \cite{Kac, Cantarini, KMZ, Kantor classification, Krutelevich}, but also to study representations of Jordan superalgebras, see \cite{MZ, Shtern, KS}. When the constructions of Tits, Kantor and Koecher are applied to a simple finite dimensional Jordan algebra over the field of complex numbers, they all yield the same Lie algebra, as follows {\it a posteori} from the classification. However, if one applies the TKK constructions to more general algebras, they can give different outcomes.

The aim of the paper is to create more structure in this plethora of TKK constructions, by (dis)proving equivalences of some of the definitions under certain conditions and describing concrete links between the different constructions.
First we consider the
zero component of the 3-graded Lie (super)algebra associated to a Jordan (super)algebra, which is often referred to as the {\em structure algebra}. Then we construct the 3-graded Lie superalgebra out of the structure algebra and the Jordan superalgebra. We refer to this algebra as the {\em TKK algebra}.

We consider four definitions of the structure algebra and show that, for {\em unital} Jordan superalgebras, they lead to two non-equivalent versions of the structure algebra. For {non-unital} Jordan superalgebras, all four definitions are non-equivalent. For completeness, we also review two further definitions of structure algebras of unital Jordan superalgebras, with no direct link to TKK constructions, and prove that these are both equivalent to one of the above definitions. One of these definitions also applies to non-unital Jordan superalgebras, and we prove that it is non-equivalent to the previous definitions.

Then we consider the TKK algebras. First we introduce Kantor's construction. Koecher's construction appears in several forms in the literature, depending on the choice of structure algebra. Finally, the construction by Tits depends on the structure algebra and an auxiliary three-dimensional Lie algebra, which we assume to be $\mathfrak{sl}_{2}$ for now.
This yields 5 definitions of TKK superalgebras associated to a Jordan superalgebra $V$, corresponding to constructions of Tits, Koecher and Kantor:
\begin{center}

\begin{tabular}{|c|c|c|}
\hline
 $\TT(V,\Inn(V),\mathfrak{sl}_{2})$ & Ko$(V)$ & $\Kan(V)$ \\ 
\hline
$\TT(V,\Der(V),\mathfrak{sl}_{2})$ & $\widetilde{\rm Ko}(V)$&\\ 
\hline 
\end{tabular} 
\end{center}
If $V$ is a simple finite dimensional Jordan algebra over the field of complex numbers, it is known that all five Lie algebras are isomorphic. We prove that so long as $V$ is unital, the three Lie superalgebras in the top row are isomorphic. Under the same assumption, the two algebras in the bottom row are then also isomorphic and given by the algebra of derivations of the Lie superalgebras in the top row. For arbitrary $V$, even when finite dimensional, we show that all five algebras can be pairwise non-isomorphic and that the link between bottom and top row through derivations generally fails.

We derive these results for the super case, but they are already pertinent for ordinary Jordan algebras. However, the differences in definitions are more exposed for Jordan superalgebras, as they already appear for {\em finite dimensional simple Jordan superalgebras over the field of complex numbers}. Contrary to simple Lie algebras, simple Lie superalgebras can admit outer derivations, and contrary to Jordan algebras, there is a simple finite dimensional Jordan superalgebra which is non-unital.

Therefore we apply our results to obtain a table with all versions of the TKK construction for the simple finite dimensional Jordan superalgebras over an algebraically closed field of characteristic zero. For this, we can rely on the classification of simple Jordan superalgebras in \cite{Cantarini, Kac} and the calculation of derivations in~\cite{Kac Lie superalgebras, Scheunert}.

We organise the paper as follows. In Section~\ref{SecPrel} we introduce some concepts and terminology regarding Jordan superalgebras and superpairs. In Section~\ref{SecStr} we investigate the different definitions of the structure algebra. In Section~\ref{SecTKK} we compare the constructions of Tits, Kantor and Koecher. In Section~\ref{SecKoe} we study further variations of the Koecher construction, based on the choice of structure algebra. In Section~\ref{SecEx} we use the above to list all the versions of the TKK algebras for the finite dimensional simple Jordan superalgebras over an algebraically closed field of characteristic zero. Finally, in Appendix A we introduce the notation for the Lie superalgebras of type $A$, $P$, $Q$ and $H$ as used in Section~\ref{SecEx}.

\section{Jordan superalgebras and superpairs}
\label{SecPrel}
In the following, we will consider a super vector space $V=V_{\oa} \oplus V_{\ob}$ over a field $\mathbb{K}$. For an element $x$ of $V_i$ we write $|x|=i$ for $i\in\{\oa,\ob\}=\mZ_2$. By $\langle A \rangle$ we will denote the $\mathbb{K}$-linear span of a set $A$. As is customary in the theory of Jordan superalgebras and superpairs, we will always assume that the characteristic of $\mK$ is different from $2$ and $3$. At this stage we make no other assumptions on $\mathbb{K}$. We note furthermore that the main results of section~\ref{SecStr}, \ref{SecTKK} and \ref{SecKoe} still hold if we replace $\mK$ by a ring containing $\frac{1}{2}$ and $\frac{1}{3}$.

\subsection{Jordan superalgebras}
\begin{definition}
A Jordan superalgebra is a super vector space $V$ equipped with a bilinear product which satisfies
\begin{itemize}
\item $V_iV_j \subset V_{i+j}, \qquad i,j \in \mZ_2$
\item $x  y = (-1)^{\abs{x}\abs{y}} y  x$ \qquad (commutativity)
\item $(-1)^{\abs{x}\abs{z}}[L_x,L_{y  z}]+(-1)^{\abs{y}\abs{x}}[L_y,L_{z  x}]+(-1)^{\abs{z}\abs{y}}[L_z,L_{x  y}]=0$ \qquad (Jordan identity),
\end{itemize}
for all homogeneous $x,y,z\in V$.
Here, the operator $L_x\colon V \to V$ is defined by $L_x(y)= xy.$
A Jordan superalgebra $V$ is unital if there exists an element $e \in V$ such that $ex=x=xe$ for all $x \in V$.
\end{definition}
We stress that we do not restrict to finite dimensional algebras.

A Jordan superalgebra satisfies the following relation, see \cite[Section 1.2]{Kac},
\begin{align}\label{Eq: inner derivaiton is a derivation}
[[L_x,L_y],L_z]= L_{x(yz)} -(-1)^{\abs{x}\abs{y}} L_{y(xz)}.
\end{align}

Define the following operators on $V$:
\[ D_{x,y} :=2 L_{x  y} +2 [L_x,L_y].\]

The Jordan triple product is given by \begin{equation}\label{eqTri} \{ x,y,z \}:=D_{x,y}z=2\left((x   y)   z+x   (y  z)  -(-1)^{\abs{x}\abs{y}}y   (x   z) \right). \end{equation}
This triple product satisfies the {\it symmetry property}
 \[ \{ x,y,z \} =(-1)^{\abs{x}\abs{y}+\abs{y}\abs{z}+\abs{x}\abs{z}} \{z,y,x\}, \]
and the {\it 5-linear Jordan identity}
\begin{align} \nonumber
\{ x,y, \{ u,v,w \}\} &- \{ \{ x,y,u\} ,v,w\}\\&=(-1)^{(\abs{x}+\abs{y})(\abs{u}+\abs{v})} (-\{u	,\{v,x,y\},w\} + \{u,v,\{x,y,w\}  \}). \nonumber
\end{align}
The 5-linear identity can be rewritten as
\begin{align}\label{DD}
[D_{x,y},D_{u,v}]&=D_{\{x,y,u\},v} -(-1)^{(\abs{x}+\abs{y})(\abs{u}+\abs{v})}D_{u,\{v,x,y\}} \\
&=D_{x,\{y,u,v\}} -(-1)^{(\abs{x}+\abs{y})(\abs{u}+\abs{v})} D_{\{u,v,x\},y}.\nonumber
\end{align}

\subsection{Jordan superpairs}
A Jordan superpair is a pair of super vector spaces $(V^{\plus}, V^{\minus})$ equipped with two even trilinear products, known as the Jordan triple products, \[ 
\{ \cdot, \cdot, \cdot \}^{\sigma} \colon V^{\sigma} \times V^{\minus\sigma} \times V^{\sigma} \to V^{\sigma},\qquad\mbox{ for } \quad\sigma \in \{+,-\}.
\]
These triple products satisfy symmetry in the outer variables
\begin{align*}
\{x,y,z\}^\sigma &= (-1)^{\abs{x}\abs{y}+\abs{y}\abs{z} + \abs{z}\abs{x}} \{z,y,x\}^\sigma,
\end{align*}
and the  $5$-linear identity
\begin{align*}
\{ x,y, \{ u,v,w \}^\sigma \}^\sigma  &- \{ \{ x,y,u\}^\sigma ,v,w\}^\sigma \\&=(-1)^{(\abs{x}+\abs{y})(\abs{u}+\abs{v})} (-\{u	,\{v,x,y\}^{\minus\sigma},w\}^\sigma + \{u,v,\{x,y,w\}^\sigma  \}^\sigma),
\end{align*}
for homogeneous $x,z,u,w \in V^\sigma$ and $y,v \in V^{\minus\sigma}$.

We will use the following operators 
\[
D_{x,y}\colon V^\sigma \to V^\sigma; \quad z \mapsto \{x,y,z\}^\sigma,
\]
for $x \in V^{\sigma}$ and $y \in V^{\minus\sigma}$.

\begin{example}\label{ExJD}
By the previous subsection, the doubling of a Jordan superalgebra $V$ gives a Jordan superpair $(V^{\plus},V^{\minus}):=(V,V)$ with products $\{x^{\sigma},y^{-\sigma},z^{\sigma}\}^{\sigma}:= \{x,y,z\}$ for $\sigma\in\{+,-\}$. Here we use the notation $x^{\plus}$, resp. $x^{\minus}$, for an element $x\in V$ interpreted as in $V^{\plus}$, resp. $V^{\minus}$. When the context clarifies in which space we interpret $x\in V$, we will leave out the indices.
\end{example}
In the following sections we will often omit the $\sigma$ in the notation for the triple product since it can be derived from the elements it acts on.

\section{Derivations and the structure algebra}\label{SecStr}
In this section we show that the (inner) structure algebra of a unital Jordan superalgebra is isomorphic to the algebra of (inner) derivations of the corresponding superpair. We provide counterexamples to both claims when the Jordan superalgebra is non-unital.

\subsection{The structure algebra for a Jordan superalgebra}
\label{Sect structure algebra}
\begin{definition} Let $V$ be a Jordan superalgebra.
An element $D$ in $\End(V)$ is called a{ \bf derivation} of $V$ if
\[
D(xy) = D(x) y + (-1)^{\abs{x}\abs{D}} x D(y).
\]
We use the notation $\Der(V)$ for the space of derivations, and $\Inn(V)$ for the subspace of {\bf inner derivations}, which is spanned by the operators $[L_x,L_y]$ for $x,y \in V$. 
\end{definition}
The condition on $D\in \End(V)$ to be a derivation is equivalent with 
\begin{equation}
\label{equivDer}[D, L_x] = L_{D(x)}\qquad\mbox{for all $x\in V$. }
\end{equation} Hence equation \eqref{Eq: inner derivaiton is a derivation} implies that $[L_x,L_y]$ is a derivation. One verifies easily that
 $\Der(V)$ is a  subalgebra of $\mathfrak{gl}(V)$. The Jacobi identity on $\mathfrak{gl}(V)$ combined with equation \eqref{equivDer}, for any derivation~$D$, implies that $\Inn(V)$ is an ideal in $\Der(V)$.

We will use the following definition for the structure algebra of Jordan superalgebras, since this is the one that will be required for the Kantor functor. There exist other definitions of the structure algebra in the literature which are not immediately connected to TKK constructions. We will review them in Section~\ref{Section alternative structure algebra} and show that for unital Jordan superalgebras they are all equivalent to our definition.

\begin{definition} \label{Definition structure algebra}
The structure algebra $\str(V)$ is a subalgebra of  $\mathfrak{gl}(V)$, defined as 
\[
\str(V)= \{ L_x \mid x \in V \} + \Der(V).
\]
\end{definition}

\begin{definition} \label{Definition inner structure algebra} 
The inner structure algebra $\istr(V)$ is a subalgebra of  $\mathfrak{gl}(V)$, defined as 
\begin{align*}
\istr(V)&=\{ L_x   \mid x \in V \} + \Inn(V) \\
&=\langle L_x, [L_x,L_y] \mid x,y \in V \rangle.
\end{align*}
\end{definition}
By the above, $\istr(V)$ is an ideal in $\str(V)$.

\begin{remark} \label{Remark decomposition structure algebra for unital Jordan algebras} {\rm For a unital Jordan superalgebra the sum in Definitions~\ref{Definition structure algebra} and~\ref{Definition inner structure algebra}  is a direct sum of super vector spaces:   \[ \str(V)=\{ L_x \mid x \in V\} \oplus \Der(V) \;\text{ and }\; \istr(V)=\{ L_x \mid x \in V\} \oplus \Inn(V), \] since $D(e)=0$ for all $D$ in $\Der(V)$, while $L_x(e)=x$. For non-unital Jordan superalgebras the sums are not necessarily direct, as follows from Example \ref{Example sum not direct} and Remark \ref{Rem: innerstructure and inner derivations}.}
\end{remark}
\begin{example}\label{Example sum not direct}
Consider the commutative three dimensional algebra $V=\langle e_1,e_1,e_3\rangle$ with product \[
e_1^2=e_1, \quad e_1e_2=\frac{1}{2}e_2, \quad e_2^2= e_3,
\]
and all other products of basis elements zero. This is the Jordan algebra $\mathcal{J}_{19}$ in \cite[Section~3.3.3]{KM}. Because \[
[L_{e_1},L_{e_2}]=-\frac{1}{2} L_{e_2},\] we conclude that $L_{e_2}$ is an element of $\Inn(V)$.
\end{example}

\subsection{Derivations of Jordan superpairs}
\begin{definition}
Let $(V^{\plus}, V^{\minus})$ be a Jordan superpair. An element $\mD=(D^{\plus},D^{\minus}) \in \End(V^{\plus})\oplus \End(V^{\minus})$ is called a {\bf derivation} of $(V^{\plus},V^{\minus})$ if
\[
D^{\sigma}(\{x,y,z\})= \{D^{\sigma}(x), y,z\} + (-1)^{\abs{x}\abs{D^{\minus\sigma}}} \{x, D^{\minus \sigma}(y), z\} +(-1)^{(\abs{x}+\abs{y})\abs{D^{\sigma}}} \{x,y, D^{\sigma} (z)\}.
\]
We use the notation $\Der(V^{\plus},V^{\minus})$ for the space of all derivations of $(V^{\plus},V^{\minus})$ and the notation $\Inn(V^{\plus}, V^{\minus})$ for the subspace of {\bf inner derivations}, which is spanned by the operators  $$\mD_{x,y}:=(D_{x,y}, -(-1)^{\abs{x}\abs{y}} D_{y,x}),\quad\mbox{for}\quad x \in V^{\plus}, y \in V^{\minus}.$$
\end{definition}

Observe that any derivation $(D^{\plus},D^{\minus})$ can be written as the sum of derivations where $D^{\plus}$ and $D^{\minus}$ have the same parity. The space $\Der(V^{\plus},V^{\minus})$ hence inherits a grading from the super vector space $\End(V^{\plus})\oplus \End(V^{\minus})$.

 By construction, the space $\Der(V^{\plus},V^{\minus})$ is a subalgebra of $\mathfrak{gl}(V^{\plus}) \oplus \mathfrak{gl}(V^{\minus})$.
The operator $\mD=(D^{\plus},D^{\minus}) \in \End(V^{\plus})\oplus \End(V^{\minus})$ is a derivation if and only if \begin{align} \label{Condition derivation} [D^{\sigma},D_{x,y}]=D_{D^{\sigma}(x),y}+ (-1)^{|\mD||x|} D_{x,D^{\minus\sigma}(y)}.
\end{align}

One can then easily verify that $\Inn(V^{\plus},V^{\minus})$ is an ideal in $\Der(V^{\plus}, V^{\minus})$.
\subsection{Connections}\label{SecISTR}

The main result of this section is the following connection between the structure algebra of a unital Jordan superalgebra and the derivations of the associated Jordan superpair in Example \ref{ExJD}.

\begin{proposition}\label{inner structure is isomorphic to inner derivations}
For a unital Jordan superalgebra $V$ we have
\begin{enumerate}
\item $\str(V) \cong \Der(V,V)$, \label{part one of propoistion structure algebra and derivations}
\item $\istr(V) \cong \Inn(V,V)$.\label{part two of proposition structure algebras and derivations}
\end{enumerate}
\end{proposition}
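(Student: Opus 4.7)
The plan is to construct a linear map $\phi\colon\str(V)\to\Der(V,V)$ via the direct sum decomposition from Remark~\ref{Remark decomposition structure algebra for unital Jordan algebras}, by setting
\[
\phi(L_x+D)=(L_x+D,\,-L_x+D),\qquad x\in V,\ D\in\Der(V).
\]
Equivalently $\phi(S)=(S,\,S-2L_{S(e)})$, since $(L_x+D)(e)=x$. The strategy is to show that $\phi$ is an isomorphism of Lie superalgebras whose restriction to $\istr(V)$ has image $\Inn(V,V)$.

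To see that $\phi$ takes values in $\Der(V,V)$, split it as $\phi=\phi_1+\phi_2$ with $\phi_1(x)=(L_x,-L_x)$ and $\phi_2(D)=(D,D)$. For $\phi_2(D)$, condition~\eqref{Condition derivation} reduces to $[D,D_{a,b}]=D_{D(a),b}+(-1)^{\abs{D}\abs{a}}D_{a,D(b)}$, which follows at once from $D_{a,b}=2L_{ab}+2[L_a,L_b]$ together with the identity $[D,L_z]=L_{D(z)}$ of~\eqref{equivDer}. For $\phi_1(x)$, unitality gives $L_x=\tfrac12 D_{x,e}$ (since $L_e=\id_V$), and~\eqref{DD} combined with the symmetry property of the triple product yields $[L_x,D_{a,b}]=D_{L_x(a),b}-(-1)^{\abs{x}\abs{a}}D_{a,L_x(b)}$, which is precisely~\eqref{Condition derivation} for $(L_x,-L_x)$ (both $\sigma=\pm$ produce the same equation). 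The Lie bracket on $\Der(V,V)\subset\gl(V)\oplus\gl(V)$ is componentwise, so once well-definedness is established, homomorphicity of $\phi$ amounts to checking three types of mixed brackets, all routine with these same two identities.

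Injectivity is immediate: $\phi(L_x+D)=0$ forces $L_x+D=0=-L_x+D$, hence $L_x=0=D$, and evaluation at $e$ gives $x=0$. The main step is surjectivity. Given $\mD=(D^{\plus},D^{\minus})\in\Der(V,V)$, apply the pair derivation identity to the triple $\{e,e,z\}=D_{e,e}(z)=2z$. Using $D_{y,e}=2L_y=D_{e,y}$ for every $y\in V$, the right-hand side collapses to $2L_{D^{\plus}(e)+D^{\minus}(e)}(z)+2D^{\plus}(z)$, while the left equals $2D^{\plus}(z)$; evaluating the resulting identity $L_{D^{\plus}(e)+D^{\minus}(e)}=0$ at $e$ yields $D^{\plus}(e)+D^{\minus}(e)=0$. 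Set $a:=D^{\plus}(e)=-D^{\minus}(e)$; subtracting $\phi_1(a)$ from $\mD$ reduces the problem to the case $D^{\pm}(e)=0$. Applying~\eqref{Condition derivation} once with $y=e$ and once with $x=e$ to $\{x,e,z\}=\{e,x,z\}=2xz$ gives
\[
D^{\plus}(xz)=D^{\plus}(x)z+(-1)^{\abs{x}\abs{\mD}}xD^{\plus}(z),\qquad D^{\plus}(xz)=D^{\minus}(x)z+(-1)^{\abs{x}\abs{\mD}}xD^{\plus}(z).
\]
The first shows $D^{\plus}\in\Der(V)$; subtracting the two and putting $z=e$ gives $D^{\plus}=D^{\minus}$, whence $\mD-\phi_1(a)=\phi_2(D^{\plus})$. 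This surjectivity argument — extracting a Jordan derivation of $V$ from a pair derivation annihilating $e$ — is the main obstacle, and hinges on $D_{e,e}=2\id_V$ together with $D_{x,e}=D_{e,x}=2L_x$.

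For part~\eqref{part two of proposition structure algebras and derivations}, a direct calculation yields $\phi(L_x)=\tfrac12\mD_{x,e}$ (from $\mD_{x,e}=(2L_x,-2L_x)$) and $\phi(D_{x,y})=\mD_{x,y}$ (expand $\mD_{x,y}=(D_{x,y},-(-1)^{\abs{x}\abs{y}}D_{y,x})$ using $L_{yx}=(-1)^{\abs{x}\abs{y}}L_{xy}$ and $[L_y,L_x]=-(-1)^{\abs{x}\abs{y}}[L_x,L_y]$). Since $\istr(V)$ is spanned by the $L_x$ and $[L_x,L_y]=\tfrac12(D_{x,y}-2L_{xy})$, while $\Inn(V,V)$ is spanned by the $\mD_{x,y}$, these two identities yield $\phi(\istr(V))=\Inn(V,V)$, with injectivity inherited from part~\eqref{part one of propoistion structure algebra and derivations}.
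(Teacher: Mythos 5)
Your proposal is correct and takes essentially the same route as the paper: the same map $\phi(L_x+D)=(L_x+D,\,-L_x+D)$, the same verification (via $D_{a,b}=2L_{ab}+2[L_a,L_b]$ and the unit identities $D_{x,e}=D_{e,x}=2L_x$, $D_{e,e}=2\,\id$) that $(L_x,-L_x)$ and $(D,D)$ are pair derivations, and the same span argument $L_x=\tfrac12 D_{x,e}$, $\phi(D_{x,y})=\mD_{x,y}$ for part (2). The only difference is organizational: for surjectivity the paper decomposes $\Der(V,V)$ into the $\pm1$-eigenspaces of the swap $(D^{\plus},D^{\minus})\mapsto(D^{\minus},D^{\plus})$ and identifies these with $\Der(V)$ and $\{L_x\mid x\in V\}$, whereas you subtract $(L_{D^{\plus}(e)},-L_{D^{\plus}(e)})$ directly and show the remainder is a diagonal derivation of $V$ -- the same unit-based computations in a slightly different packaging.
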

\begin{remark} {\rm
For a unital Jordan algebra, we have that $\str(V)$ is the Lie algebra of the structure group (Section~\ref{Section alternative structure algebra}) and that $\Der(V,V)$ is the Lie algebra of the automorphism group of the Jordan pair $(V,V)$, (\cite[I.1.4]{Loos}. Since the structure group is isomorphic to the automorphism group of the Jordan pair, \cite[Proposition 1.8]{Loos}, we can immediately conclude that $\str(V)\cong \Der(V,V)$.}
\end{remark}
\begin{remark}\label{RemInn}{\rm Both parts of the proposition do not extend, as stated, to \emph{non-unital} Jordan superalgebras. 
As a counterexample consider again Example \ref{Example sum not direct}. One can easily check that \[\istr(V)= \langle D_{x,y} \mid x,y \in V \rangle  = \langle L_{e_1}, L_{e_2} \rangle \text{ and } \Inn(V,V) = \langle (L_{e_1}, -L_{e_1}),(L_{e_2},0), (0,L_{e_2}) \rangle . \] 
Define $A \in \End(V)$ by $A(e_1)=0$, $A(e_2)=2e_2$ and $A(e_3)=e_3$.
Then we also obtain \[ \str(V)= \istr(V) + \langle A\rangle \text { and } \Der(V,V)=  \Inn(V,V) + \langle (A,A), (A,-A)\rangle .\] 
Subsection~\ref{sec: example Jordan superalgebra K} also contains a counterexample where $\istr(V)\cong \Inn(V,V) $ but $\str(V) \not\cong \Der(V,V)$.  
 }
\end{remark}

Even without the existence of a multiplicative identity $e$, we still have a chain of inclusions if $L_x$ is not a derivation for any $x$ in $V$.
\begin{proposition} \label{Prop: chain of inclusions}
For a Jordan superalgebra $V$ for which $L_x \not\in \Der(V),$ for all $x$ in $V$, we have
\[ \Inn(V,V)\;\subset\;\istr(V)\;\subset\;\str(V)\;\subset\; \Der(V,V).\]
\end{proposition}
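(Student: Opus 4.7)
The middle inclusion $\istr(V)\subset \str(V)$ is immediate from the definitions. To realise the remaining inclusions inside $\Der(V,V)$, I would construct an injective linear map
\[
\varphi \colon \str(V) \longrightarrow \End(V)\oplus \End(V),\qquad \varphi(L_x + D) := (L_x+D,\, -L_x+D),
\]
for $x\in V$ and $D\in \Der(V)$. The hypothesis $L_x\notin \Der(V)$ for nonzero $x$ forces the sum $\str(V)= \{L_x\mid x\in V\}+\Der(V)$ to be direct, so $\varphi$ is well-defined; and $\varphi$ is injective since $\varphi(L_x+D)=0$ gives $L_x=D\in \Der(V)$, hence $x=0$ and then $D=0$.

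Next I would show $\varphi(\str(V))\subset \Der(V,V)$. By linearity it suffices to check $(D,D)\in \Der(V,V)$ for every $D\in \Der(V)$ and $(L_x,-L_x)\in \Der(V,V)$ for every $x\in V$. The first is a direct expansion of the triple product \eqref{eqTri} using the derivation property of $D$. For the second, unravelling the pair-derivation axiom reduces the claim to the operator identity
\[
[L_x,D_{a,b}] = D_{xa,b}-(-1)^{\abs{x}\abs{a}}D_{a,xb}
\]
on $V$. In the unital setting this would drop out of \eqref{DD} by specialising $y=e$, but in the present generality I would expand $D_{a,b}=2L_{ab}+2[L_a,L_b]$, apply the super Jacobi identity to $[L_x,[L_a,L_b]]$ and invoke \eqref{Eq: inner derivaiton is a derivation} to reduce everything to $L$-operators, then close the computation using the Jordan identity from the axioms together with super-commutativity of $V$. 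The super-sign bookkeeping in this step is the main technical obstacle.

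Finally, for the leftmost inclusion I would establish the key identity $\varphi(D_{x,y})=\mD_{x,y}$. From $D_{x,y}=2L_{xy}+2[L_x,L_y]$ the first coordinate of $\varphi(D_{x,y})$ is $D_{x,y}$ tautologically, while the second coordinate $-2L_{xy}+2[L_x,L_y]$ coincides with $-(-1)^{\abs{x}\abs{y}}D_{y,x}$ after applying $L_{yx}=(-1)^{\abs{x}\abs{y}}L_{xy}$ and $[L_y,L_x]=-(-1)^{\abs{x}\abs{y}}[L_x,L_y]$. Since the $\mD_{x,y}$ span $\Inn(V,V)$ and each $D_{x,y}$ lies in $\istr(V)$, this yields $\Inn(V,V)\subset\varphi(\istr(V))$, and identifying $\str(V)$ and $\istr(V)$ with their $\varphi$-images gives the full chain of inclusions.
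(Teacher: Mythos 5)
Your argument is correct and is essentially the paper's own proof: your map $\varphi$ is exactly the embedding $\phi$ of \eqref{equation phi}, and its image lands in $\Der(V,V)$ by the same key computation (Lemma~\ref{Lemma: left multiplication and D are derivations}) that $(D,D)$ and $(L_x,-L_x)$ are derivations of the pair $(V,V)$, with the hypothesis used, as in the paper, only to make the sum $\{L_x\mid x\in V\}+\Der(V)$ direct. The only cosmetic difference is that the paper realizes the first inclusion via the projection $\psi\colon\Inn(V,V)\to\istr(V)$, $\mD_{x,y}\mapsto D_{x,y}$, proving its injectivity from the hypothesis, whereas you equivalently observe $\varphi(D_{x,y})=\mD_{x,y}$; you should also record, as the paper does, that $\varphi$ respects brackets (a direct computation), so that the chain is one of Lie superalgebra embeddings rather than merely of super vector spaces.
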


\begin{remark}{\rm
 Examples where the second inclusion is strict can be found in Subsection~\ref{section: Examples of TKK constructions} while an example for the third inclusion to be strict can be found in Subsection~\ref{sec: example Jordan superalgebra K}. }
 \end{remark}

Now we start the proofs of the propositions.

\begin{lemma} \label{Lemma: left multiplication and D are derivations}
Let $V$ be a Jordan superalgebra. For $x$ in $V$  and $D$ in $\Der(V)$, we have that 
\[
(L_x,-L_x) \qquad \text{ and } \qquad (D,D) 
\]
are elements of  $\Der(V,V).$
\end{lemma}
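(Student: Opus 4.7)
The plan is to directly verify the derivation condition \eqref{Condition derivation}, namely
\[
[D^{\sigma},D_{a,b}] = D_{D^{\sigma}(a),b} + (-1)^{|\mathcal{D}||a|} D_{a,D^{-\sigma}(b)},
\]
for the two candidates $\mathcal{D}=(D,D)$ and $\mathcal{D}=(L_x,-L_x)$, exploiting the expansion $D_{a,b}=2L_{ab}+2[L_a,L_b]$ so that the question is reduced to identities purely among left multiplication operators.

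For $\mathcal{D}=(D,D)$ the calculation is short. Since $D \in \Der(V)$, equation \eqref{equivDer} gives $[D,L_y]=L_{D(y)}$ for every $y \in V$. Applying this twice, together with super Jacobi on $[D,[L_a,L_b]]$, yields
\[
[D, D_{a,b}] \;=\; 2L_{D(a)b} + 2(-1)^{|D||a|}L_{aD(b)} + 2[L_{D(a)},L_b] + 2(-1)^{|D||a|}[L_a,L_{D(b)}],
\]
which is exactly $D_{D(a),b}+(-1)^{|D||a|}D_{a,D(b)}$, so the derivation condition holds with $D^{-\sigma}=D$.

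For $\mathcal{D}=(L_x,-L_x)$ the condition to check becomes
\[
[L_x,D_{a,b}] \;=\; D_{xa,b} \;-\; (-1)^{|x||a|} D_{a,xb},
\]
and after expanding $D$ this reduces, after cancelling a factor of $2$, to the two identities
\begin{align*}
[L_x,[L_a,L_b]] &= L_{(xa)b} - (-1)^{|x||a|}L_{a(xb)},\\
[L_x,L_{ab}] &= [L_{xa},L_b] - (-1)^{|x||a|}[L_a,L_{xb}].
\end{align*}
The first is obtained by applying super Jacobi to $[L_x,[L_a,L_b]]$, using \eqref{Eq: inner derivaiton is a derivation} on each of the two resulting brackets $[[L_x,L_a],L_b]$ and $[L_a,[L_x,L_b]]$, and finally invoking supercommutativity of the product to rewrite $L_{b(xa)}$ as a sign times $L_{(xa)b}$. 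The second is precisely the Jordan identity evaluated on $(x,a,b)$, once one uses supercommutativity $L_{bx}=(-1)^{|b||x|}L_{xb}$ (and similarly for $L_{xa}$) to align the terms.

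The only real obstacle is bookkeeping of the Koszul signs: each manipulation (super Jacobi, supercommutativity of $L$, and the application of \eqref{Eq: inner derivaiton is a derivation} in a non-cyclic order) produces a sign, and one must verify that they collapse to the single factor $(-1)^{|x||a|}$ appearing in \eqref{Condition derivation}. No further Jordan-theoretic input is needed beyond \eqref{Eq: inner derivaiton is a derivation}, the Jordan identity, and supercommutativity of the product.
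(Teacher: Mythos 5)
Your proposal is correct and follows essentially the same route as the paper: both verify condition \eqref{Condition derivation} by expanding $D_{a,b}=2L_{ab}+2[L_a,L_b]$, handling the $[L_x,[L_a,L_b]]$ term via \eqref{Eq: inner derivaiton is a derivation} (with the super Jacobi identity and supercommutativity) and the $[L_x,L_{ab}]$ term via the Jordan identity, while the $(D,D)$ case uses \eqref{equivDer} exactly as in the paper. The two intermediate identities you isolate are correct as stated, signs included, so your sketch matches the paper's computation up to reorganization.
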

\begin{proof}
Using the Jordan identity and equation \eqref{Eq: inner derivaiton is a derivation}, we get for $x,y,z$ in $V$
\begin{align*}
[L_x, D_{y,z}]&=2[L_x,L_{yz}] + 2 [L_x,[L_y,L_z]] \\
&=-2(-1)^{\abs{x}(\abs{y}+\abs{z})} [L_y,L_{zx}]-2(-1)^{\abs{z}(\abs{x}+\abs{y})} [L_z,L_{xy}]+2L_{(xy)z}-2(-1)^{\abs{x}\abs{y}}L_{y(xz)} \\
&= D_{L_x(y),z} -(-1)^{\abs{x}\abs{y}} D_{y, L_x(z)}.
\end{align*}
Thus $(L_x,-L_x)$  satisfies equation \eqref{Condition derivation} and hence belongs to $\Der(V,V)$.
 
Let $D \in \Der(V)$. By equation~\eqref{equivDer}, the Jacobi identity and the definition of $\Der(V)$, we find
\begin{align*}
[D,D_{x,y}] &= 2[D, L_{xy}] + 2 [D,[L_x,L_y]] \\
&=2 L_{D(xy)} -2(-1)^{\abs{D}(\abs{x}+\abs{y})}   [L_x,[L_y,D]] - 2(-1)^{\abs{y}(\abs{D}+\abs{x})} [L_y,[D,L_x]] \\
&= 2 L_{D(x)y} + 2 [L_{D(x)}, L_y] +2(-1)^{\abs{D}\abs{x}} L_{xD(y)} + 2(-1)^{\abs{x}\abs{D}} [L_x, L_{D(y)}] \\
&= D_{D(x),y} + (-1)^{\abs{x}\abs{D}} D_{x,D(y)}. 
\end{align*}
Therefore, also $(D,D)$ satisfies equation \eqref{Condition derivation} and is thus an element of $\Der(V,V)$. 
\end{proof}
\begin{proof}[Proof of Proposition~\ref{Prop: chain of inclusions}]
Since $D_{x,y}=2L_{xy}+2[L_x,L_y]$, the map \begin{align} \label{equation psi}
 \psi\colon \Inn(V,V) \to \istr(V) ;\quad \mD_{x,y}=(D_{x,y},-(-1)^{\abs{x}\abs{y}}D_{y,x}) \mapsto D_{x,y} 
 \end{align}
  is well-defined and clearly a Lie superalgebra morphism. 
Assume $D_{x,y}=0$. Then $L_{xy}=~-[L_x,L_y]$ is a derivation. So by our assumption $L_{xy}=0$, an thus also $D_{y,x}=0$. Therefore $\psi$ is injective.

From the definitions it follows immediately that $\istr(V) \subset \str(V)$.  By assumption, $\str(V)$ is a direct sum of $\{ L_x \mid x \in V \}$ and $\Der(V)$. Together with Lemma \ref{Lemma: left multiplication and D are derivations} this implies that the map
\begin{align} \label{equation phi}
\phi \colon \str(V) \to \Der(V,V); \quad  L_x + D \mapsto (L_x+D, -L_x +D),
\end{align}
is well-defined. This map is clearly injective. The fact that this is also a Lie superalgebra morphism follows from a direct computation, which finishes the proof.
\end{proof}

The rest of this section is devoted to the proof of Proposition~\ref{inner structure is isomorphic to inner derivations}, so we consider a unital Jordan superalgebra $V$. From Remark \ref{Remark decomposition structure algebra for unital Jordan algebras} it follows then that the assumption of Proposition~\ref{Prop: chain of inclusions} is satisfied, so we can use that result.   We will also use the following immediate consequences of equation \eqref{eqTri},
\begin{equation}
\label{eqe}
\frac{1}{2} \{x,e,y\}\,=\, xy\,=\,L_x(y) \qquad \text{ and }\qquad \frac{1}{2} \{e,x,e \}=x.
\end{equation}
Consider the map $\sigma$
\begin{align*}
\sigma\colon   \Der(V,V) \to \Der(V,V); \quad (D^{\plus}, D^{\minus}) \mapsto (D^{\minus}, D^{\plus}).
\end{align*}
Then $\sigma^2 = \id$ and $\Der(V,V)$ decomposes in two subspaces
\begin{align*}
\mathfrak{h} := \{ \mD \in \Der(V,V) \mid \sigma(\mD)= \mD \} \text{ and }
\mathfrak{q}:= \{ \mD \in \Der(V,V) \mid \sigma(\mD)= -\mD \}.
\end{align*}

\begin{lemma}\label{h is isomorphic to subset of der(V,V)}
We have 
\[
 \mathfrak{h}=\{ \mD \in \Der(V,V) \mid D^{\minus}(e) =0 \}. 
\]
\end{lemma}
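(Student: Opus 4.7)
The plan is to prove the set equality by showing both inclusions, using the two special values of the triple product on the unit recorded in equation~\eqref{eqe}, namely $\{e,e,a\}=2a$ and $\{e,a,e\}=2a$.

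The key preliminary observation is that \emph{every} derivation $\mathcal{D}=(D^{+},D^{-}) \in \Der(V,V)$ satisfies
\[
D^{+}(e) + D^{-}(e) = 0.
\]
To see this, apply the defining derivation identity for $\sigma = +$ with $x = z = e$ and $y = e$. Using $\{e,e,a\} = 2a$ on the left, and expanding the three terms on the right with $\{x,e,y\}=2xy$ (together with $|e|=\oa$, making all signs trivial), one obtains
\[
2 D^{+}(a) \;=\; 2 D^{+}(e)\,a \,+\, 2 D^{-}(e)\,a \,+\, 2 D^{+}(a),
\]
so $\bigl(D^{+}(e)+D^{-}(e)\bigr)a = 0$ for every $a \in V$. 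Since $e$ is a unit, this forces $D^{+}(e) + D^{-}(e) = 0$.

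The forward inclusion $\mathfrak{h} \subset \{\mathcal{D}\mid D^{-}(e)=0\}$ is now immediate: if $\sigma(\mathcal{D})=\mathcal{D}$ then $D^{+}=D^{-}$, so the identity above becomes $2D^{-}(e)=0$, giving $D^{-}(e)=0$ (here we use $\mathrm{char}(\mK)\neq 2$).

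For the reverse inclusion, assume $D^{-}(e)=0$, so the preliminary identity gives also $D^{+}(e)=0$. Apply the derivation identity now with $x = z = e$ and the middle variable $a \in V^{-}$, using $\{e,a,e\}=2a$. The three terms on the right become $\{D^{+}(e),a,e\}$, $(-1)^{|e||D^{-}|}\{e,D^{-}(a),e\}$ and $(-1)^{(|e|+|a|)|D^{+}|}\{e,a,D^{+}(e)\}$; the first and third vanish by $D^{+}(e)=0$, while the middle one equals $2 D^{-}(a)$. Thus $2D^{+}(a)=2D^{-}(a)$ for all $a$, giving $D^{+}=D^{-}$ and therefore $\mathcal{D}\in\mathfrak{h}$.

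There is no real obstacle here: the proof is a direct double-evaluation of the derivation identity on $\{e,e,-\}$ and $\{e,-,e\}$. The only thing to be careful about is tracking the Koszul signs in the derivation condition, which are all trivial in the required specializations because $|e|=\oa$.
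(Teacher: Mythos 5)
Your proof is correct and follows essentially the same route as the paper: both directions amount to evaluating the superpair derivation identity on triple products built from the unit and invoking $\mathrm{char}(\mK)\neq 2$, with your preliminary identity $D^{\plus}(e)+D^{\minus}(e)=0$ merely packaging the paper's $\{e,e,e\}$-computation in a slightly more uniform way. One cosmetic remark: the middle term in your preliminary computation is $\{e,D^{\minus}(e),a\}$, which is not literally of the form $\{x,e,y\}$ covered by \eqref{eqe}; it does equal $2D^{\minus}(e)\,a$ since $L_e=\id$ gives $D_{e,u}=2L_u$ (alternatively, specializing to $x=y=z=e$ as you state avoids this and uses \eqref{eqe} directly).
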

\begin{proof}
Assume first that $D^{\minus}(e) =0$. By equation~\eqref{eqe}, we have
\[ D^{\minus}(e) = \frac{1}{2}D^{\minus} \{e,e,e\}  = \frac{1}{2} \{ e, D^{\plus} (e), e \} = D^{\plus}(e), \]
and hence also $D^{\plus}(e)=0$. Then we also get for all $x$ in $V$
\[
2 D^{\plus}(x) = D^{\plus} \{e,x,e \}= \{ e, D^{\minus}(x) , e \} = 2 D^{\minus} (x).  
\]
Hence $D^{\plus}=D^{\minus}$. 

Now assume that $D^{\plus}=D^{\minus}$. Equation~\eqref{eqe} then implies \[ D^{\minus} (e)=  \frac{1}{2}D^{\minus} \{e,e,e\} =\frac{1}{2} \{ D^{\minus} (e),  e, e \}+\frac{1}{2} \{ e, D^{\plus} (e), e \}+\frac{1}{2} \{ e,e, D^{\minus}(e) \} =3 D^{\minus}(e).
\]
Hence $D^{\minus}(e)=0$. This concludes the proof.
\end{proof}

\begin{lemma}\label{LemP1}
We have a Lie superalgebra isomorphism
\[
\Der(V) \cong  \mathfrak{h},\qquad \mbox{given by}\quad \phi\colon \Der(V) \to \mathfrak{h} ; \;D \mapsto (D,  D).
\]
\end{lemma}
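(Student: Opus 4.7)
My plan is to verify the four standard things for a proposed isomorphism: that $\phi$ is well-defined, that it is a morphism of Lie superalgebras, that it is injective, and that it is surjective. The first three are essentially immediate, so the real content sits in surjectivity, and this is where Lemma~\ref{h is isomorphic to subset of der(V,V)} and the unit $e$ will do the work.

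For well-definedness, given $D\in\Der(V)$, the pair $(D,D)$ lies in $\Der(V,V)$ by the second half of Lemma~\ref{Lemma: left multiplication and D are derivations}, and it clearly satisfies $\sigma(D,D)=(D,D)$, so it belongs to $\mathfrak{h}$. For the morphism property, the bracket on $\Der(V,V)\subset \mathfrak{gl}(V)\oplus\mathfrak{gl}(V)$ is componentwise, which gives $[\phi(D_1),\phi(D_2)]=([D_1,D_2],[D_1,D_2])=\phi([D_1,D_2])$ with no sign subtleties. Injectivity is trivial since $\phi(D)=0$ forces $D=0$.

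For surjectivity, take $\mathcal{D}=(D^{\plus},D^{\minus})\in\mathfrak{h}$. By Lemma~\ref{h is isomorphic to subset of der(V,V)} (more precisely, by the computation inside its proof) one has $D^{\plus}=D^{\minus}$ and this common operator $D$ annihilates $e$. It only remains to check $D\in\Der(V)$. Applying the defining identity of a superpair derivation to the triple $\{x,e,z\}$ and using $D(e)=0$, the middle term drops out and I obtain
\[
D(\{x,e,z\})=\{D(x),e,z\}+(-1)^{|x||D|}\{x,e,D(z)\}.
\]
Dividing by $2$ and invoking $\tfrac{1}{2}\{x,e,z\}=xz$ from~\eqref{eqe} yields $D(xz)=D(x)z+(-1)^{|x||D|}xD(z)$, i.e.\ $D\in\Der(V)$, and then $\mathcal{D}=\phi(D)$.

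The main (mild) obstacle is really just the bookkeeping around surjectivity: one has to notice that the fixed-point characterization $\mathfrak{h}=\{\mathcal{D}\mid D^{\minus}(e)=0\}$ from the previous lemma immediately gives both $D^{\plus}=D^{\minus}$ and $D(e)=0$, so that specialization of the triple product to $y=e$ exactly converts the superpair derivation axiom into the superalgebra derivation axiom. No additional calculation is needed beyond this one substitution.
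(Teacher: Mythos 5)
Your proposal is correct and follows essentially the same route as the paper: membership in $\mathfrak{h}$ together with Lemma~\ref{h is isomorphic to subset of der(V,V)} gives $D^{\plus}=D^{\minus}$ with $D^{\minus}(e)=0$, and specializing the superpair derivation identity to $\{x,e,y\}$ via \eqref{eqe} yields $D\in\Der(V)$, which is exactly the paper's surjectivity argument. The only cosmetic difference is that you check well-definedness, the morphism property and injectivity directly (componentwise bracket), whereas the paper obtains them by restricting the morphism $\phi\colon\str(V)\to\Der(V,V)$ from \eqref{equation phi}.
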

\begin{proof}

The map $\phi$ is a restriction to $\Der(V)$ of the morphism $\phi\colon \str (V) \to \Der(V,V)$ defined in \eqref{equation phi}. From there we know that it is  injective. The image of $\phi$ is clearly contained in $\mathfrak{h}$.
To show that $\phi$ is surjective, we let $\mD=(D^{\plus},D^{\minus})$ be an element of $\Der(V^{\plus},V^{\minus})$  with $D^{\plus} = D^{\minus},$ i.e. $\mD  \in \mathfrak{h}$. Then $D^{\minus}(e) =0$  by Lemma \ref{h is isomorphic to subset of der(V,V)}.  Hence, using equation \eqref{eqe},
\begin{align*}
D^{\plus}  (xy) & = \frac{1}{2} D^{\plus} \{x,e,y\}  \\&= \frac{1}{2} \{ D^{\plus}x, e, y \} +(-1)^{\abs{D}\abs{x}}  \frac{1}{2} \{ x, D^{\minus}e, y \}+(-1)^{\abs{D}\abs{x}} \frac{1}{2} \{ x, e,D^{\plus} y \}  \\&= D^{\plus}(x)y + (-1)^{\abs{D}\abs{x}} xD^{\plus}(y).
\end{align*}
We conclude that $D^{\plus}=D^{\minus}$ is an element of $\Der(V)$, so $(D^{\plus},D^{\minus})$ is in the image of $\phi$.
\end{proof}

\begin{lemma}\label{LemP2}
We have an isomorphism of super vector spaces
\[ \{ L_x \mid x \in V \} \,\,\,\tilde{\to}\,\,\,\mathfrak{q},\quad\mbox{ given by }\quad L_x \mapsto (L_x, -L_x).\]
\end{lemma}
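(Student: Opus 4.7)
The plan is to deduce the isomorphism by combining Lemma~\ref{Lemma: left multiplication and D are derivations} (well-definedness) with Lemma~\ref{h is isomorphic to subset of der(V,V)} and the fact that $\sigma^2 = \id$ yields an internal direct sum decomposition $\Der(V,V)=\mathfrak{h}\oplus\mathfrak{q}$ (since $\mathrm{char}\,\mK\neq 2$).

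First I would check that the map is well-defined into $\mathfrak{q}$: by Lemma~\ref{Lemma: left multiplication and D are derivations}, $(L_x,-L_x)\in\Der(V,V)$, and a direct computation gives $\sigma(L_x,-L_x)=(-L_x,L_x)=-(L_x,-L_x)$, so it indeed lies in $\mathfrak{q}$. The map is clearly $\mK$-linear and parity-preserving (since $L_x$ has the parity of $x$). Injectivity is immediate: if $(L_x,-L_x)=0$ then $L_x=0$.

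For surjectivity, I take an arbitrary $\mD=(D^{\plus},D^{\minus})\in\mathfrak{q}$, so that $D^{\minus}=-D^{\plus}$, and set $x:=D^{\plus}(e)$. Consider
\[
\mD':=\mD-(L_x,-L_x),
\]
which still lies in $\mathfrak{q}$. By construction $(D')^{\plus}(e)=D^{\plus}(e)-L_x(e)=x-x=0$, and also $(D')^{\minus}(e)=-D^{\plus}(e)+L_x(e)=0$. By Lemma~\ref{h is isomorphic to subset of der(V,V)}, this forces $\mD'\in\mathfrak{h}$. Since $\mathfrak{h}\cap\mathfrak{q}=\{0\}$ (again using $\mathrm{char}\,\mK\neq 2$, as $\sigma(\mD')=\mD'=-\mD'$ gives $2\mD'=0$), we conclude $\mD'=0$, i.e. $\mD=(L_x,-L_x)$.

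The only conceptual point is recognising that the element $e$ of the unital algebra lets one extract a canonical $L_x$-piece from any skew-symmetric derivation of the pair: once the correct candidate $x=D^{\plus}(e)$ is identified, Lemma~\ref{h is isomorphic to subset of der(V,V)} does all the remaining work by collapsing the residual derivation into $\mathfrak{h}\cap\mathfrak{q}=0$. No Jordan-identity calculation beyond what is already encoded in the previous lemmas is required.
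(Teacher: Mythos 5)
Your proof is correct, but the surjectivity step follows a genuinely different route from the paper's. The paper argues directly: for $\mD=(D,-D)\in\mathfrak{q}$ it expands $D(x)=\tfrac12 D(\{e,x,e\})$ using the derivation rule and the identity $\tfrac12\{e,x,e\}=x$, getting $D(x)=2D(e)x-D(x)$ and hence $D=L_{D(e)}$; Lemma~\ref{h is isomorphic to subset of der(V,V)} is not used there at all. You instead take the same candidate $x=D^{\plus}(e)$, subtract $(L_x,-L_x)$, note that the difference stays in $\mathfrak{q}$ and annihilates $e$ in its minus component, invoke Lemma~\ref{h is isomorphic to subset of der(V,V)} to place it in $\mathfrak{h}$, and conclude from $\mathfrak{h}\cap\mathfrak{q}=0$ (characteristic $\neq 2$) that it vanishes. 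Both arguments are sound and rest on the same underlying computation with $\{e,x,e\}$: yours recycles it through the already-proved characterisation of $\mathfrak{h}$, so no new Jordan-identity manipulation is needed, at the price of a little eigenspace bookkeeping; the paper's version is self-contained and makes the explicit formula $D=L_{D(e)}$ visible, which is conceptually the same normalisation that reappears as $X^{\ast}=-X+2L_{X(e)}$ in Section~\ref{Section alternative structure algebra}. Your well-definedness and injectivity checks coincide in substance with the paper's, which obtains them by restricting the injective morphism $\phi$ of \eqref{equation phi}, itself resting on Lemma~\ref{Lemma: left multiplication and D are derivations}.
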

\begin{proof}
The assignment $L_a \to (L_a,-L_a)$ is a restriction to $\{L_x \mid x \in V \}$ of the injective morphism $\phi\colon \str(V) \to \Der(V,V)$ considered in \eqref{equation phi}. Its image is clearly contained in $\mathfrak{q}$. So the map is well-defined and injective. 
For an element $\mD=(D,-D)$ in $\mathfrak{q}$ we claim that $(L_{D(e)}, -L_{D(e)})= \mD$.
Indeed, using equation~\eqref{eqe}, we have \begin{align*}
 D(x)&= \frac{1}{2}D(\{e,x,e\}) =\frac{1}{2}\{D(e),x,e\}-\frac{1}{2}\{e,D(x),e\}+(-1)^{\abs{D}\abs{x}}\frac{1}{2}\{e,x,D(e)\} \\
 &=2D(e) x -D(x),
 \end{align*}
which implies that $D(x)= L_{D(e)} (x)$. This proves surjectivity. 
\end{proof}

\begin{proof}[Proof of Proposition~\ref{inner structure is isomorphic to inner derivations}]
Consider the injective morphism \[ \phi\colon\Der(V) \oplus  \{ L_x \mid x \in V \}\to \Der(V,V); \qquad L_x + D \mapsto (L_x +D, -L_x+D)\]
of \eqref{equation phi}. From Lemmata \ref{LemP1} and \ref{LemP2} it follows that $\phi$ is also surjective.
This proves part \eqref{part one of propoistion structure algebra and derivations} of the proposition.
For part \eqref{part two of proposition structure algebras and derivations}, consider the injective morphism
\begin{align*}
\psi\colon \Inn(V,V) \to \istr(V); \quad (D_{x,y}, -(-1)^{\abs{x}\abs{y}} D_{y,x} ) \mapsto D_{x,y}.
\end{align*}
defined in \eqref{equation psi}.
Since
\[
L_x= \frac{1}{2} D_{x,e}\; \text{ and }\;
[L_x,L_y] =\frac{1}{4}( D_{x,y},-(-1)^{\abs{x}\abs{y}} D_{y,x}),
\]
the map $\psi$ is surjective, which concludes the proof.
\end{proof}

\subsection{Alternative definitions for the (inner) structure algebra}
\label{Section alternative structure algebra} We review some further definitions appearing in the literature.
Set 
\[ U_{x,y}\colon V \to V; \;z\mapsto  (-1)^{\abs{y}\abs{z}} \{x,z,y\}.\] 
Then for a unital Jordan algebra we define, see \cite[Section 3.1]{Garcia},
\[
\widetilde{\str}(V):= \{ X \in \mathfrak{gl}(V) \mid U_{X(a),b} + (-1)^{\abs{X}\abs{b}} U_{a,X(b)}= X U_{a,b} + (-1)^{\abs{X}(\abs{a}+\abs{b})} U_{a,b} X^\ast \text{ for all } a,b \in V\},
\]
where $X^\ast= -X +2L_{X(e)}$.
In the non-super case, this algebra is the Lie algebra of the structure group, see \cite[Section 9]{Jacobson}.

In the literature we did not find an explicit definition of the structure algebra for the non-unital case using this approach. However, we will define a natural generalization which for a unital Jordan superalgebra will reduce to $\widetilde{\str}(V)$.
So, for $V$ a Jordan superalgebra, define $\str_w(V)$ as the Lie superalgebra consisting of the elements $(X, Y) \in \mathfrak{gl}(V)\oplus \mathfrak{gl}(V)^{op}$ for which 
\begin{align}\label{condition weakly structure algebra}
  &U_{X(a),b} + (-1)^{\abs{X}\abs{a}} U_{a,X(b)}= X U_{a,b} + (-1)^{\abs{Y}(\abs{a}+\abs{b})} U_{a,b} Y   \text{ and } \\   & U_{Y(a),b} + (-1)^{\abs{Y}\abs{a}} U_{a,Y(b)}= Y U_{a,b} + (-1)^{\abs{X}(\abs{a}+\abs{b})} U_{a,b} X  \nonumber
\end{align}
  holds for all  $a,b$ in $V$. If $V$ is a Jordan algebra, one can check that $\str_w(V)$ is the Lie algebra of the group consisting of pairs of `weakly structural transformations', as defined in \cite[II.18.2]{McCrimmon}. 

Using the equality $U_{x,y} (z)= (-1)^{\abs{y}\abs{z}} D_{x,z}(y)$, one finds that the defining conditions of $\str_w(V)$ are equivalent with $(X,-Y) \in \Der(V,V)$.
So we conclude that $\str_w(V) \cong \Der(V,V)$ in full generality.
\begin{lemma}
For a unital Jordan superalgebra $V$, we have
$$\str_w(V)\;\cong\; \widetilde{\str}(V)\;\cong\; \str(V).$$
\end{lemma}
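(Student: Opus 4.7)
The plan is to leverage the already-established isomorphism $\str_w(V) \cong \Der(V,V)$ noted just before the statement, together with Proposition~\ref{inner structure is isomorphic to inner derivations}(\ref{part one of propoistion structure algebra and derivations}), which gives $\str(V) \cong \Der(V,V)$ in the unital case. Composing these yields $\str(V) \cong \str_w(V)$ without any additional work. All the effort therefore concentrates on the remaining isomorphism $\widetilde{\str}(V) \cong \str_w(V)$.

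For the latter I propose the map
\[
\Phi: \widetilde{\str}(V) \longrightarrow \str_w(V), \qquad X \longmapsto (X, X^\ast),
\]
where $X^\ast = -X + 2L_{X(e)}$ is the involution already used in the definition of $\widetilde{\str}(V)$. Injectivity of $\Phi$ is evident, and the image is contained in $\str_w(V)$ essentially by definition: the first of the two identities in \eqref{condition weakly structure algebra} with $Y = X^\ast$ is (up to sign conventions to be checked) precisely the defining condition of $\widetilde{\str}(V)$, while the second identity follows from the first by interchanging $X$ and $X^\ast$. This interchange is consistent because $X^\ast(e) = -X(e) + 2 L_{X(e)}(e) = X(e)$, whence $(X^\ast)^\ast = X$, so $X$ and $X^\ast$ really do play symmetric roles inside the pair $(X,X^\ast)$.

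Surjectivity is where the unital hypothesis enters non-trivially. Given $(X, Y) \in \str_w(V)$, I specialise the first equation of \eqref{condition weakly structure algebra} at $a = b = e$. The relations $\tfrac12\{e, z, e\} = z$ and $\tfrac12\{x, e, y\} = xy$ from \eqref{eqe}, combined with the symmetry of the Jordan triple product, give
\[
U_{e,e} = 2\,\id, \qquad U_{X(e), e} \;=\; U_{e, X(e)} \;=\; 2 L_{X(e)},
\]
so that the identity collapses to $4 L_{X(e)} = 2X + 2Y$, forcing $Y = -X + 2L_{X(e)} = X^\ast$. Thus every pair in $\str_w(V)$ has the form $(X, X^\ast)$, and once $Y$ is so determined the original identity of $\str_w(V)$ restricts to the defining relation of $\widetilde{\str}(V)$, placing $X$ in $\widetilde{\str}(V)$. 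Finally, a direct check shows $\Phi$ respects the bracket.

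The principal obstacle I anticipate is reconciling the apparent sign mismatch between the defining identity of $\widetilde{\str}(V)$, which carries $(-1)^{|X||b|}$, and the first identity of $\str_w(V)$, which carries $(-1)^{|X||a|}$. Bridging the two requires the symmetry property of the $U$-operator (equivalently, of the Jordan triple product) together with careful parity bookkeeping, and possibly the observation that the two defining identities of $\str_w(V)$ are dual to one another under the swap $X \leftrightarrow Y$. Once these conventions are aligned, the remainder of the verification reduces to routine sign chasing.
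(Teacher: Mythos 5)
Your route coincides with the paper's in all its main steps: the same map $X \mapsto (X, X^\ast)$ with $X^\ast = -X + 2L_{X(e)}$, the same surjectivity argument obtained by specialising $a=b=e$ in \eqref{condition weakly structure algebra} to force $Y = -X+2L_{X(e)}$, and the same final reduction $\str_w(V)\cong\Der(V,V)\cong\str(V)$ via Proposition~\ref{inner structure is isomorphic to inner derivations}. The one genuine gap is in the well-definedness of your map $\Phi$, namely the claim that the second identity in \eqref{condition weakly structure algebra} ``follows from the first by interchanging $X$ and $X^\ast$''. That interchange would be legitimate only if you already knew that $X^\ast$ itself belongs to $\widetilde{\str}(V)$ (together with $(X^\ast)^\ast=X$); but the only fact you establish is $(X^\ast)^\ast = X$, which says nothing about $X^\ast$ satisfying the structural identity. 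Concretely, writing $c=X(e)$ and substituting $X^\ast=-X+2L_c$ into the second identity, then cancelling against the first identity, what remains to be proved is (up to the sign conventions you already flag)
\[
U_{ca,b} + (-1)^{\abs{c}\abs{a}}\, U_{a,cb} \;=\; L_c\, U_{a,b} + (-1)^{\abs{c}(\abs{a}+\abs{b})}\, U_{a,b}\, L_c ,
\]
i.e. precisely the statement that $L_c$ is itself structural with $(L_c)^\ast=L_c$. This is a nontrivial Jordan identity, equivalent to $(L_c,-L_c)\in\Der(V,V)$, and it cannot be extracted from the relation satisfied by $X$ by formal symbol-swapping. The paper closes exactly this point by invoking Lemma~\ref{Lemma: left multiplication and D are derivations}; once you insert that ingredient, the remainder of your argument --- injectivity, the computation $U_{e,e}=2\,\id$ and $U_{X(e),e}=U_{e,X(e)}=2L_{X(e)}$ forcing $Y=X^\ast$, and the (routine) bracket compatibility --- is correct and agrees with the paper's proof.
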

\begin{proof}
Let $X$ be an element of $\widetilde{\str}(V)$. Note that by definition 
$X$ satisfies equation \eqref{condition weakly structure algebra} for $Y=-X+2L_{X(e)}$.  From Lemma \ref{Lemma: left multiplication and D are derivations} we know that $(L_{X(e)}, -L_{X(e)}) \in \Der(V,V)$. Combining this, one shows easily that\[  U_{Y(a),b} + (-1)^{\abs{Y}\abs{b}} U_{a,X(b)}= Y U_{a,b} + (-1)^{\abs{X}(\abs{a}+\abs{b})} U_{a,b} X \text{ for all } a,b \in V\] holds for $Y= -X + 2L_{X(e)}$. Thus the map $\varphi\colon\widetilde{\str}(V) \to \str_w(V); \; X \mapsto  (X , -X + 2L_{X(e)})$ is well-defined. Let $(X,Y) \in \str_w(V)$. Then setting $a$ and $b$ equal to the unit $e$ in equation  \eqref{condition weakly structure algebra} gives us $Y= -X + 2L_{X(e)}$, hence $\varphi$ is an isomorphism.
Since $\str_w(V) \cong \Der(V,V)$, Proposition~\ref{inner structure is isomorphic to inner derivations}, immediately implies that $\str_w(V)$ is also isomorphic to $\str(V)$.
 \end{proof} 

The inner structure algebra is also often defined as the Lie superalgebra spanned by the operators $D_{x,y}\in\End(V)$, see for example \cite[Section 9]{Jacobson}, \cite[Chapter 4]{Springer} and \cite[Section 3.1]{Garcia}. For this algebra we will use the notation \[\widetilde{\istr}(V):= \langle D_{x,y} \mid x,y \in V \rangle.\] 

\begin{lemma}
For a Jordan superalgebra $V$ for which $L_x \not\in \Der(V),$ for all $x$ in $V$, we have
\[
\widetilde{\istr}(V) \cong \Inn(V,V).
\]
In particular, for a unital Jordan superalgebra, we have
\[
\widetilde{\istr}(V) \cong \istr(V).
\]
\end{lemma}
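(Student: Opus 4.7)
My plan is to reuse the morphism $\psi$ from \eqref{equation psi}, now viewed as landing in $\widetilde{\istr}(V)$ rather than $\istr(V)$, since the assignment $\mathcal{D}_{x,y}\mapsto D_{x,y}$ takes values in the span of the $D_{x,y}$'s by construction. Surjectivity of $\psi\colon\Inn(V,V)\to\widetilde{\istr}(V)$ is then immediate from the definition of $\widetilde{\istr}(V)$, and $\psi$ is a Lie superalgebra morphism by the same direct check used in the proof of Proposition~\ref{Prop: chain of inclusions}.

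For injectivity, the plan is to run the argument from the proof of Proposition~\ref{Prop: chain of inclusions} on a general linear combination rather than on a single generator. Suppose $\sum_i D_{x_i,y_i}=0$. Expanding $D_{x,y}=2L_{xy}+2[L_x,L_y]$ and using \eqref{Eq: inner derivaiton is a derivation}, this forces $L_{\sum_i x_iy_i}=-\sum_i[L_{x_i},L_{y_i}]$ to lie in $\Der(V)$, so the hypothesis $L_z\notin\Der(V)$ for $z\neq 0$ yields $\sum_i x_iy_i=0$ and therefore also $\sum_i[L_{x_i},L_{y_i}]=0$. The supercommutativity $yx=(-1)^{|x||y|}xy$ of the Jordan product, together with the skew-supersymmetry of the bracket, then transfers this vanishing to $\sum_i(-1)^{|x_i||y_i|}D_{y_i,x_i}=0$, so the full pair $\sum_i\mathcal{D}_{x_i,y_i}$ vanishes in $\Inn(V,V)$.

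For the unital case I would first observe that the running hypothesis is automatic: any $D\in\Der(V)$ satisfies $D(e)=D(e\cdot e)=2D(e)$ and hence $D(e)=0$, whereas $L_x(e)=x$, so $L_x\in\Der(V)$ forces $x=0$. The first statement of the lemma therefore applies, and combining it with Proposition~\ref{inner structure is isomorphic to inner derivations}\eqref{part two of proposition structure algebras and derivations} yields $\widetilde{\istr}(V)\cong\Inn(V,V)\cong\istr(V)$.

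The main obstacle I anticipate is the signed bookkeeping in the injectivity step: a priori the vanishing of $\sum_i D_{x_i,y_i}$ in $\End(V)$ does not visibly control the twisted swap $\sum_i(-1)^{|x_i||y_i|}D_{y_i,x_i}$, and it is precisely the hypothesis on $L_z$ together with supercommutativity that bridges the two. Once this bridge is in place, everything else is essentially packaging of results already established in the section.
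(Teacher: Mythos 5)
Your proposal is correct and follows essentially the same route as the paper's own proof: the projection $\mD_{x,y}\mapsto D_{x,y}$ from \eqref{equation psi}, injectivity via the hypothesis on left multiplications combined with supercommutativity, and the unital case deduced from Proposition~\ref{inner structure is isomorphic to inner derivations}. The only difference is that you spell out the injectivity argument for general linear combinations $\sum_i D_{x_i,y_i}=0$, which the paper states only on single generators and leaves implicit; this is a harmless (indeed slightly more careful) amplification, not a different method.
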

\begin{proof}
By assumption $L_{xy} \not\in \Der(V)$, so we have that  $D_{x,y}=0$ implies $D_{y,x}=0$. Hence the map $\Inn(V,V) \to \widetilde{\istr}(V);\; (D_{x,y},-(-1)^{\abs{x}\abs{y}}D_{y,x}) \mapsto D_{x,y}$ is bijective. It is also clearly an algebra morphism. This proves the first part of the lemma. Since unital Jordan superalgebras satisfy $L_x \not\in \Der(v)$, for all $x$ in $V$, Proposition~\ref{inner structure is isomorphic to inner derivations} immediately implies the second part of the lemma. 
\end{proof}
\begin{remark} \label{Rem: innerstructure and inner derivations} {\rm
In the non-unital case we can both have $\widetilde{\istr}(V) \not\cong \istr(V)$ and $\widetilde{\istr}(V) \not\cong \Inn(V,V)$. The example in Remark \ref{RemInn} is a counterexample for the second part, while a counterexample for the first part is as follows. 
Consider $V:=t\mathbb{K}[t]/(t^k)$, the algebra of polynomials in the variable $t$ without constant term, modulo the ideal $(t^k)= t^k\mathbb{K}[t]$ of polynomials without term in degree lower than~$k$ for some~$k\in\mZ_{>2}$. This is an (associative) Jordan algebra, for the standard multiplication of polynomials, which does not have multiplicative identity. In this example we have $D_{f,g}=2L_{fg}$, for all $f,g\in V$. We hence find that
$$\Inn(V,V)\cong \widetilde{\istr}(V) = \Span_{\mathbb{K}}\{L_{t^2}, L_{t^3},\ldots,L_{t^{k-2}}\}.$$
On the other hand, by definition, we have
$$\istr(V)=\Span_{\mathbb{K}}\{L_{t}, L_{t^2},\ldots,L_{t^{k-2}}\}.$$
As the dimensions of both abelian Lie algebras do not agree, we find $\istr (V) \not\cong \widetilde{\istr}(V)$.
Observe further that $L_{t^{k-2}}$ is an element of $\Der(V)$. Therefore the structure algebra $\str(V)$ also does not have a direct sum decomposition as in Remark \ref{Remark decomposition structure algebra for unital Jordan algebras}
}
\end{remark}

\section{The Tits-Kantor-Koecher construction} \label{SecTKK}
In this section, we will study the three different TKK constructions, dating back to Tits, Kantor and Koecher, and show that, for unital Jordan superalgebras, they are equivalent. Again this claim does not extend to non-unital cases.

\subsection{TKK for Jordan superalgebras (Kantor's approach) }  \label{section TKK for Jordan superalgebras}

In \cite{Kac}, Kac uses the ``Kantor functor" $\Kan$ to classify simple finite dimensional Jordan superalgebras over an algebraically closed field of characteristic zero. This functor is a generalisation to the supercase of the one considered by Kantor in \cite{Kantor}.  In particular this functor provides a TKK construction, which we review for arbitrary Jordan superalgebras over arbitrary fields.

We associate to a Jordan superalgebra $V$, the 3-graded Lie superalgebra 
$$\Kan(V):=\fg=\mg_-\oplus \mg_0\oplus \mg_+,\quad\mbox{with}$$ 
$$\mg_-:=V\quad \mbox{and }\quad \mg_0:= \istr(V)= \langle L_a, [L_a,L_b]\rangle\,\subset \, \End(\fg_-).$$
Finally, $ \mg_+$ is defined as the subspace $\langle P,[L_a,P] \rangle$ of $ \End(\fg_-\otimes \fg_-,\fg_-),$ with
$$P(x,y)=xy\quad\mbox{ and}\qquad [L_a,P](x,y):=a(xy)-(ax)y - (-1)^{\abs{x}\abs{y}} (ay)x.$$
Note that  $P=-[L_e,P]$ for a unital Jordan superalgebra.

As the notation suggests, $[L_a,P]$ corresponds to the superbracket of $L_a\in\fg_0$ and $P\in \fg_+$. The Lie superbracket is then completely defined by 
\begin{itemize}
\item $[\mg_-,\mg_-]=0=[\mg_+,\mg_+]$.
\item $[a,x]=a(x)$, for $a \in \mg_0, x \in \mg_-$.
\item $[A,x](y) = A(x,y)$, for $A \in \mg_+, x,y \in \mg_-$.
\item $[a,B](x,y)= a(B(x,y))-(-1)^{\abs{a}\abs{B}} B(a(x),y) )-(-1)^{\abs{a}\abs{B}+\abs{x}\abs{y}}B(a(y),x)$, for $a \in \mg_0$, $B \in \mg_+$ and $x,y\in\fg_-$.
\end{itemize}
To verify that $\Kan(V)$ is a Lie superalgebra, one can use the following relations (see Proposition~5.1 in \cite{Cantarini})
\begin{itemize}
\item $[P,x]=L_x$
\item  $[[L_a,P],x]=[L_a,L_x]-L_{a  x}$
\item $[L_a,[L_b,P]]=-[L_{a  b},P]$
\item $[[L_a,L_b],P]=0$
\item $[[L_a,L_b],[L_c,P]]=(-1)^{\abs{b}\abs{c}}[L_{a  (c  b) -(a  c)  b} ,P]$.
\end{itemize}

\subsection{TKK for Jordan superpairs (Koecher's approach)} \label{section TKK for Jordan superpairs} 
In \cite{Koecher}, Koecher defined a product on a triple consisting of two vector spaces and a Lie algebra acting on these vector spaces. This product makes the triple into a $3$-graded anti-commutative algebra, which is a Lie algebra if and only if the vector spaces form a Jordan pair and the Lie algebra acts by derivations on the vector spaces. Hence the Koecher construction gives rise to a TKK construction, not only for Jordan algebras, but for Jordan pairs, which is the most natural formulation. Note that, as the concept of Jordan pairs was not yet studied at the time, Koecher did not use this terminology. This TKK construction can be generalised to the supercase, which was for example used by Krutelevich to classify simple finite dimensional Jordan superpairs over an algebraically closed field in characteristic zero in \cite{Krutelevich}.

We associate a 3-graded Lie superalgebra $\TKK(V^{\plus},V^{\minus})$ to the Jordan superpair $(V^{\plus},V^{\minus})$ in the following way.
As vector spaces we have \[\TKK(V^{\plus},V^{\minus})= V^{\plus} \oplus \Inn(V^{\plus},V^{\minus}) \oplus V^{\minus}.\]
The Lie super bracket on $\TKK(V^{\plus},V^{\minus})$ is defined by
\begin{align*}
[x,u]&=\mD_{x,u} \\
[\mD_{x,u},y]&=\mD_{x,u}(y)=\{x,u,y\} \\
[\mD_{x,u},v]&=\mD_{x,u}(v)=-(-1)^{\abs{x}\abs{u}} \{u,x,v\} \\
[\mD_{x,u},\mD_{y,v}]&=\mD_{\mD_{x,u}(y),v} + (-1)^{(\abs{x}+\abs{u})\abs{y}} \mD_{y,\mD_{x,u}(v)} \\
[x,y]&=[u,v]=0,
\end{align*}
for $x,y \in V^{\plus}$, $u,v \in V^{\minus}$.
Recall that $\mD_{x,u}=(D_{x,u}, -(-1)^{\abs{x}\abs{u}} D_{u,x})\in\Inn(V^+,V^-)$.

In case $V$ is a Jordan superalgebra, we simply write $\TKK(V)$ for $\TKK(V,V)$.

Conversely, with each 3-graded Lie superalgebra $\mg=\mg_{-1} \oplus \mg_0 \oplus \mg_{+1}$ 
 we can associate a  Jordan superpair by $\mathcal{J}(\mg)= (\mg_{+1},\mg_{-1})$ with the Jordan triple product given by
\[
\{x^\sigma,y^{-\sigma},z^\sigma \}^\sigma := [[x^\sigma,y^{-\sigma}],z^\sigma].
\]

\begin{definition}
A 3-graded Lie superalgebra $\mg=\mg_- \oplus \mg_0 \oplus \mg_+$ is called \textbf{Jordan graded} if 
\[
[\mg_+,\mg_-]=\mg_0 \;\text{ and }\; \mg_0 \cap Z(\mg)=0.
\]
\end{definition}

We have the following result by Lemmata $4$ and $5$ in \cite{Krutelevich}. 
\begin{proposition}\label{TKK inverse}
For every Jordan superpair $(V^{\plus},V^{\minus})$, we have
\[
\mathcal{J} (\TKK(V^{\plus},V^{\minus}))\cong (V^{\plus},V^{\minus}).\]
Let $\mg$ be a Jordan graded Lie superalgebra, then $ \TKK(\mathcal{J}(\mg)) \cong \mg.$
\end{proposition}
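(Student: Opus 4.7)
The plan is to construct explicit natural morphisms realising both isomorphisms.

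For the first claim, I observe that $\TKK(V^{\plus},V^{\minus})$ is $3$-graded by construction, with degree $\pm 1$ components exactly $V^{\pm}$, so $\mathcal{J}(\TKK(V^{\plus},V^{\minus}))$ equals $(V^{\plus},V^{\minus})$ as a pair of super vector spaces via the identity map. What remains is to verify that the triple product induced by the Lie superbracket matches the original one, which follows directly by unwinding the definitions of Section~\ref{section TKK for Jordan superpairs}: for $x,z\in V^{\plus}$ and $y\in V^{\minus}$, $[[x,y],z]=[\mathcal{D}_{x,y},z]=\mathcal{D}_{x,y}(z)=\{x,y,z\}^{\plus}$, and symmetrically on the minus side.

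For the second claim, fix a Jordan graded $\mathfrak{g}=\mathfrak{g}_-\oplus\mathfrak{g}_0\oplus\mathfrak{g}_+$ and set $V^{\pm}:=\mathfrak{g}_{\pm 1}$, so that $\mathcal{J}(\mathfrak{g})=(V^{\plus},V^{\minus})$ with $\{x,y,z\}^{\sigma}=[[x,y],z]$. I would define $\phi\colon\TKK(\mathcal{J}(\mathfrak{g}))=V^{\plus}\oplus\Inn(V^{\plus},V^{\minus})\oplus V^{\minus}\to\mathfrak{g}$ by the identity on the $\pm 1$ components and $\phi(\mathcal{D}_{x,y}):=[x,y]\in\mathfrak{g}_0$, extended linearly. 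The main obstacle is showing this is well defined on the middle summand, and this is exactly where both hypotheses of Jordan grading come into play. Suppose $\sum_i\mathcal{D}_{x_i,y_i}=0$; for $h:=\sum_i[x_i,y_i]\in\mathfrak{g}_0$, the identity $[h,z]=\sum_i\mathcal{D}_{x_i,y_i}(z)=0$ for $z\in V^{\plus}$, together with the analogous statement on $V^{\minus}$, the Jacobi identity, and the surjectivity $[\mathfrak{g}_+,\mathfrak{g}_-]=\mathfrak{g}_0$, force $h$ to centralise all of $\mathfrak{g}$, so $h\in\mathfrak{g}_0\cap Z(\mathfrak{g})=0$.

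Once well-definedness is in hand, injectivity of $\phi$ follows by reversing the same implication, while surjectivity onto $\mathfrak{g}_0$ is immediate from $[\mathfrak{g}_+,\mathfrak{g}_-]=\mathfrak{g}_0$. The bracket-preservation of $\phi$ is then a routine case-by-case check along the $3$-graded decomposition: on $[V^{\plus},V^{\minus}]$ it holds by the very definition of $\phi$; on $[\Inn(V^{\plus},V^{\minus}),V^{\pm}]$ one uses $[\mathcal{D}_{x,y},z]=\{x,y,z\}=[[x,y],z]$; and on $[\Inn(V^{\plus},V^{\minus}),\Inn(V^{\plus},V^{\minus})]$ one matches the defining five-term identity for $[\mathcal{D}_{x,y},\mathcal{D}_{u,v}]$ to iterated Jacobi expansions of $[[x,y],[u,v]]$ in $\mathfrak{g}$. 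The only substantive step is the well-definedness argument above; everything else is bookkeeping.
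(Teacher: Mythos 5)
Your proof is correct, but it is worth noting that the paper does not prove this proposition at all: it simply quotes Lemmata 4 and 5 of Krutelevich and remarks that they extend beyond the finite dimensional, algebraically closed, characteristic-zero setting. Your argument is essentially the standard one underlying those lemmata, carried out directly. You correctly identify the only non-trivial point, namely well-definedness of $\mD_{x,y}\mapsto[x,y]$ on $\Inn(V^{\plus},V^{\minus})$, and you use both halves of the Jordan graded hypothesis exactly where they are needed: if $\sum_i\mD_{x_i,y_i}=0$ then $h=\sum_i[x_i,y_i]$ kills $\mg_{\pm1}$ (on the minus side because $[[x,y],w]=-(-1)^{\abs{x}\abs{y}}D_{y,x}(w)$, which is precisely the second component of $\mD_{x,y}$ -- a sign check you pass over with ``the analogous statement on $V^{\minus}$'' but which does work out), hence by Jacobi and $[\mg_+,\mg_-]=\mg_0$ it kills $\mg_0$ as well, so $h\in\mg_0\cap Z(\mg)=0$. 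The reverse implication gives injectivity, surjectivity is immediate, and the bracket check reduces, on the $\mg_0$-$\mg_0$ part, to expanding $[[x,u],[y,v]]$ by Jacobi and recognising the defining formula for $[\mD_{x,u},\mD_{y,v}]$; the remaining trivial case $[V^{\sigma},V^{\sigma}]=0$ matches $\mg_{\pm2}=0$ in a $3$-graded algebra. What your route buys is a self-contained verification, valid verbatim for arbitrary superpairs over any field of characteristic $\neq 2,3$, which the paper only asserts by inspection of the cited proofs; what the paper's route buys is brevity.
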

Note that the main results in \cite{Krutelevich} are only concerned with finite dimensional pairs, over algebraically closed fields with characteristic zero. However, the mentioned lemmata still hold for arbitrary Jordan superpairs over a field with characteristic different from $2$ or $3$.

\subsection{Connection}
The main result of this section is the following proposition, which shows that Kantor's and Koecher's constructions for unital Jordan superalgebras coincide.

\begin{proposition}\label{PropKan}
For a unital Jordan superalgebra $V$, we have $\Kan(V) \cong \TKK(V)$.
\end{proposition}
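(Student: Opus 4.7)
My plan is to apply Proposition~\ref{TKK inverse}, which reduces the claim to two ingredients: first, that $\Kan(V)$ is a Jordan graded 3-graded Lie superalgebra, and second, that the associated Jordan superpair $\mathcal{J}(\Kan(V))$ is isomorphic to $(V,V)$ from Example~\ref{ExJD}. Together, these yield $\Kan(V) \cong \TKK(\mathcal{J}(\Kan(V))) \cong \TKK(V,V)=\TKK(V)$.

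For the first ingredient, the inclusion $[\mg_+,\mg_-]\subseteq \mg_0$ is automatic from the grading. Conversely, the relation $[P,x]=L_x$ places every $L_x$ in $[\mg_+,\mg_-]$, and then $[[L_a,P],x]=[L_a,L_x]-L_{ax}$ forces $[L_a,L_x]$ to lie there too. Since these operators generate $\mg_0=\istr(V)$, equality holds. For the center condition, any $T\in\mg_0$ central in $\Kan(V)$ satisfies $T(x)=[T,x]=0$ for all $x\in V$, so $T=0$ already in $\End(V)\supseteq\istr(V)$.

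For the second ingredient, unitality is used twice: $P=-[L_e,P]$ shows $\mg_+$ is spanned by $\{[L_a,P]:a\in V\}$, and evaluating $[L_a,P](x,e)=-ax$ shows the map $a\mapsto[L_a,P]$ is injective. Hence I can define $\Theta^+\colon \mg_+\to V$ by $[L_a,P]\mapsto \tfrac12 a$ and let $\Theta^-\colon \mg_-\to V$ be the identity. The crux is to check that $\Theta=(\Theta^+,\Theta^-)$ intertwines the triple products. Using the Kantor relations
\[
[L_a,[L_b,P]]=-[L_{ab},P],\qquad [[L_a,L_b],[L_c,P]]=(-1)^{\abs{b}\abs{c}}[L_{a(cb)-(ac)b},P],
\]
the triple $\{[L_a,P],x,[L_b,P]\}^+=[[[L_a,P],x],[L_b,P]]$ reduces to $[L_w,P]$ with $w=(-1)^{\abs{x}\abs{b}}(a(bx)-(ab)x)+(ax)b$. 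A short supercommutativity manipulation identifies $w$ with $\tfrac12\{a,x,b\}$, and the factor $\tfrac12$ in $\Theta^+$ is exactly what is needed to cancel this mismatch. The analogous triple product on the $-$ side is handled similarly.

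The main obstacle is precisely this triple-product computation: tracking the signs coming from the supercommutativity of the Jordan product together with the bracket relations of the Kantor construction requires some care. Once this is done, Proposition~\ref{TKK inverse} immediately yields $\Kan(V)\cong\TKK(V)$.
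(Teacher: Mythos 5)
Your proposal is correct and follows essentially the same route as the paper: show $\Kan(V)$ is Jordan graded, invoke Proposition~\ref{TKK inverse}, and identify $\mathcal{J}(\Kan(V))$ with the pair $(V,V)$ via the map sending $[L_a,P]\mapsto \tfrac{a}{2}$ (the paper's $\phi$, which also sends $P=-[L_e,P]\mapsto -\tfrac{e}{2}$) and the identity on $\mg_-$. The only difference is that you spell out the triple-product verification that the paper leaves as ``one can check,'' and your computation of $w=\tfrac12\{a,x,b\}$ is accurate.
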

\begin{proof}
 Let $\Kan(V) = \mg_+ \oplus \mg_0 \oplus \mg_-$. The relations $[P,a]=L_a$ and
 $[[L_a,P],b]=[L_a,L_b]-L_{a  b}$ in Subsection~\ref{section TKK for Jordan superalgebras} imply $[\mg_-,\mg_+]=\mg_0.$ For all $x\in \mg_0$, it follows from the definition of the bracket that, if $[x,\mg_-]=0 $, then $x=0$. So $\mg_0 \cap Z(\mg)=0$ and $\Kan(V)$ is Jordan graded. Hence Proposition~\ref{TKK inverse} implies $$\TKK(\mathcal{J}(\Kan(V))) \cong \Kan(V).$$

Set $(V^{\plus}, V^{\minus}):=\mathcal{J}(\Kan(V))$. Then $V^{\minus} = V$ and $V^{\plus} = \langle P, [L_a,P] \mid a \in V\rangle.$
One can check that the map $\phi\colon (V^{\plus}, V^{\minus})\to (V,V)$ defined by 
\begin{itemize}
\item $\phi(x)= x $ for $x \in  V^{\minus}$,
\item $\phi(P) = -\frac{e}{2},$ where $e$ is the unit of $V$,
\item $\phi([L_a,P]) = \frac{a}{2}$.
\end{itemize}
is an isomorphism of Jordan pairs.
From this it follows that $$\TKK(V)=\TKK(V,V) \cong \TKK(\mathcal{J}(\Kan(V))) \cong \Kan(V),$$ which proves the proposition.
\end{proof}

\begin{remark}{\rm
The proposition as stated does not extend to Jordan superalgebras without multiplicative identity $e$. If $V$ is finite dimensional but not unital, we will generally have
$$\dim \Kan(V)_{\plus} \;\not=\; \dim V=\dim\TKK(V)_{\plus},$$
and hence $\Kan(V)\not\cong \TKK(V)$. This difference in dimension can for instance be caused by the occurrence of elements of $V$ for which the left multiplication operator is trivial , since this lowers the dimension of $\langle P, [L_a,P] \rangle$, or by $P\not\in\langle [L_a,P] \rangle$, which raises the dimension.

Another source of counterexamples comes from Jordan superalgebras $V$ which satisfy $ \Inn(V,V)\not=\istr(V)$, see {\it e.g.} Remark \ref{RemInn}.}
\end{remark}

\subsection{Tits' approach.} \label{Section Tits' approach}
There is a third version of the TKK construction, which appeared in \cite{Tits} and historically was the first to appear. 
In this section, we will give the super version of this construction by Tits. 

Consider an arbitrary Jordan superalgebra $V$. Let $\cD$ be a Lie superalgebra, containing $\Inn(V)$,
with a Lie superalgebra morphism
\begin{align*}
\psi:\;\cD\,\to\, \Der(V);\qquad d\mapsto \psi_d,
\end{align*}
such that $\psi$ acts as the identity on the subalgebra $\Inn(V)$.
Finally, let $Y$ be an arbitrary three-dimensional simple Lie algebra $Y$. For example, for $\mK=\mC$, we only have $Y\cong\mathfrak{sl}_{2}(\mC)$ and for $\mK=\mR$ either $Y\cong \mathfrak{sl}_{2}(\mR)\cong\mathfrak{su}(1,1)$ or $Y\cong\mathfrak{su}(2)$. Let $(y,y'):= \frac{1}{2} \tr(\ad(y)\ad(y'))$ be the Killing form on $Y$.

Then we define a Lie superalgebra
$$\TT(V,\cD,Y)\;:=\; \cD\,\oplus \, (Y\otimes V),$$
where $\cD$ is a subalgebra, and the rest of the multiplication is defined by
\begin{align*}
[d, y\otimes v]&= y \otimes \psi_d(v), \\
[y\otimes v , y'\otimes v']&= (y,y') [L_v,L_{v'}] + [y,y']\otimes vv',
\end{align*}
for arbitrary $d\in \cD$, $y,y'\in Y$ and $v,v'\in V$.  For $Y=\mathfrak{sl}_2(\mK)$ we can use the $3$-grading on $\mathfrak{sl}_2(\mK)$ to define a $3$-grading on $\TT(V,\cD,\mathfrak{sl}_2(\mK))$: 
\[
\TT(V,\cD,\mathfrak{sl}_2(\mK))_{\minus } = \mathfrak{sl}_2(\mK)_{\minus} \otimes V,\quad \TT(V,\cD,Y)_{0 } = \cD \oplus (\mathfrak{sl}_2(\mK)_{0} \otimes V), \quad \TT(V,\cD,\mathfrak{sl}_2(\mK))_{\plus } = \mathfrak{sl}_2(\mK)_{\plus} \otimes V.
\]

For unital Jordan superalgebras, this contains, as a special case, Koecher's and hence also Kantor's construction, as we prove in the following proposition.
\begin{proposition}\label{proposition Koecher and Tits are isomorphic}
For a unital Jordan superalgebra $V$ we have
$$\TT\left(V,\Inn(V),\mathfrak{sl}_{2}(\mK)\right)\;\cong\; \TKK(V).$$
\end{proposition}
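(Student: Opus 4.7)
The plan is to show that $\TT := \TT(V, \Inn(V), \mathfrak{sl}_2(\mK))$, equipped with the $3$-grading induced from the standard grading of $\mathfrak{sl}_2(\mK)$, is a Jordan graded Lie superalgebra whose associated Jordan superpair $\mathcal{J}(\TT)$ is isomorphic to $(V,V)$. Proposition~\ref{TKK inverse} will then yield
$$\TT \;\cong\; \TKK(\mathcal{J}(\TT)) \;\cong\; \TKK(V,V) \;=\; \TKK(V).$$

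Fixing the standard basis $\{e, h, f\}$ of $\mathfrak{sl}_2(\mK)$, the grading from Section~\ref{Section Tits' approach} reads $\TT_- = f \otimes V$, $\TT_0 = \Inn(V) \oplus (h \otimes V)$, $\TT_+ = e \otimes V$. A direct trace calculation gives $(e,f) = 2$ while $(e,e) = (f,f) = (h,e) = (h,f) = 0$, so the defining bracket of $\TT$ specialises to
$$[e \otimes v,\, f \otimes w] \;=\; 2[L_v, L_w] + h \otimes vw$$
for all $v, w \in V$. Setting $v$ equal to the unit $e_V$ of $V$ gives $h \otimes w \in [\TT_+, \TT_-]$ for every $w$, and subtracting these terms off then recovers all brackets $[L_v, L_w]$; hence $[\TT_+, \TT_-] = \TT_0$. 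For triviality of $Z(\TT) \cap \TT_0$, I would take $X = d + h \otimes u \in \TT_0$ with $[X, \TT_\pm] = 0$: computing $[X, e \otimes z]$ and $[X, f \otimes z]$ via the same formulas forces $d = -2L_u$ and $d = 2L_u$, so $L_u = 0$ in $\End(V)$; applying $L_u$ to $e_V$ then yields $u = 0$, and therefore $d = 0$.

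Next I would compute the Jordan triple product on $\mathcal{J}(\TT) = (\TT_+, \TT_-)$. Combining the key bracket identity with $[h, e] = 2e$ and $(h,e) = 0$ gives
$$[[e \otimes v,\, f \otimes w],\, e \otimes z] \;=\; 2e \otimes [L_v, L_w](z) + 2e \otimes (vw)z \;=\; e \otimes D_{v,w}(z) \;=\; e \otimes \{v, w, z\}.$$
This shows that the $\mK$-linear map $(V, V) \to \mathcal{J}(\TT)$ sending $v^+ \mapsto e \otimes v$ and $v^- \mapsto f \otimes v$ is an isomorphism of Jordan superpairs, after which Proposition~\ref{TKK inverse} concludes the argument.

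The main obstacle is verifying the Jordan graded condition: both $[\TT_+, \TT_-] = \TT_0$ and the vanishing of the graded centre depend essentially on the presence of the unit $e_V$, mirroring the role of unitality in Proposition~\ref{inner structure is isomorphic to inner derivations} and explaining why the hypothesis cannot be dropped. The remainder of the proof reduces to a single explicit bracket computation, made painless by the fact that $\mathfrak{sl}_2(\mK)$ is purely even so no $\mZ_2$-signs appear on that factor, followed by the already-established Proposition~\ref{TKK inverse}.
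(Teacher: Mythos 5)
Your proposal is correct, but it takes a genuinely different route from the paper. The paper proves this proposition directly: using Proposition~\ref{inner structure is isomorphic to inner derivations}(2) to present the degree-zero part of $\TKK(V)$ as $\istr(V)$, it simply writes down the explicit isomorphism $e\otimes a\mapsto a^{\plus}$, $f\otimes a\mapsto a^{\minus}$, $h\otimes a\mapsto 2L_a$, $[L_a,L_b]\mapsto [L_a,L_b]$ and notes that the morphism property follows from the definitions. You instead run the scheme the paper reserves for $\Kan(V)$ in Proposition~\ref{PropKan}: verify that $\TT(V,\Inn(V),\mathfrak{sl}_{2}(\mK))$ with its natural $3$-grading is Jordan graded (your use of the unit is exactly where unitality must enter, both to obtain $h\otimes V\subset[\TT_{\plus},\TT_{\minus}]$ and to kill the degree-zero centre), identify $\mathcal{J}(\TT)\cong(V,V)$ via $[[e\otimes v,f\otimes w],e\otimes z]=e\otimes D_{v,w}(z)$, and invoke Proposition~\ref{TKK inverse}. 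Your bracket computations, including $(e,f)=2$ and $(h,e)=(h,f)=0$ for the normalised Killing form, are right; the one small point to add is that you only check the triple product for $\sigma=+$, and the companion identity $[[f\otimes v,e\otimes w],f\otimes z]=f\otimes\{v,w,z\}$ should be recorded as well, though it follows by the same one-line computation. As for what each approach buys: the paper's explicit map is shorter, avoids Krutelevich's Proposition~\ref{TKK inverse}, is exactly the map later generalised in Proposition~\ref{Propnu}, and, as observed in Remark~\ref{RemTits}, works verbatim for non-unital $V$ whenever $\istr(V)\cong\Inn(V,V)$; your route avoids any by-hand verification that a proposed map is a Lie superalgebra morphism and does not need Proposition~\ref{inner structure is isomorphic to inner derivations}(2) at all, but it leans on the unit in two essential places and so does not immediately give that non-unital refinement.
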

\begin{proof}
Consider a $\mK$-basis $e,f,h$ of $\mathfrak{sl}_{2}(\mK)$, such that $[e,f]=h$, $[h,e]=2e$ and $[h,f]=-2f$.
For a unital Jordan superalgebra $V$, we have $\Inn(V,V)\cong \istr(V)$, by Proposition~\ref{inner structure is isomorphic to inner derivations}(2).

Then an isomorphism between $\TT\left(V,\Inn(V),\mathfrak{sl}_{2}(\mK)\right)$ and $\TKK(V)= V^{\plus} \oplus \istr(V) \oplus V^{\minus}$ is given by
\begin{align*}
e\otimes a \mapsto a^{\plus},\quad f\otimes a \mapsto a^{\minus}, \quad h\otimes a \mapsto 2L_a, \quad [L_a,L_b] \mapsto [L_a,L_b].
\end{align*}
It follows from the definitions that this is a Lie superalgebra morphism.
\end{proof}
\begin{remark}\label{RemTits} {\rm
From the proof, it is clear that the proposition still holds for non-unital Jordan superalgebras so long as $\istr(V)\cong \Inn(V,V)$.}
\end{remark}

Now we consider the opposite direction of the above construction. Let $N$ be a Lie superalgebra and $Y$ a simple Lie algebra of dimension $3$. We say that $Y$ acts on $N$ if there is an (even) Lie superalgebra homomorphism from $Y$ to $\Der(N)$. For example, we can define an action of $Y$ on $\TT\left(V,\cD,Y\right)$ as follows
\[
y\cdot ( d + y' \otimes v)= [y,y'] \otimes v. 
\]
Under this action, $\TT\left(V,\cD,Y\right)$ viewed as an $Y$-module decomposes as a trivial part given by $\cD$ and $\dim(V)$ copies of the adjoint representation. 

Now consider an arbitrary Lie superalgebra $N$ with $Y$-action which decomposes as above, {\it viz.} as a trivial representation $\cD$ and some copies of the adjoint representation,
\[
N= \cD \oplus (Y\otimes A),
\]
for some vector space $A$. As a direct generalisation of \cite{Tits}, we show
that there is a Jordan algebra structure on $A$ where $\cD$ acts on $A$ by derivations and $\TT\left(V,\cD,Y\right)$ is the inverse of this construction.

\begin{proposition}
Let $N$ be a Lie superalgebra and $Y$ a $3$-dimensional simple Lie algebra which acts on $N$ such that $N$ decomposes as $N=\cD\oplus (Y\otimes A)$ where $\cD$ is a trivial representation and $Y$ the adjoint representation. 
Then $A$ is a Jordan superalgebra and $\cD$ is a superalgebra containing the inner derivations on $A$ equipped with a morphism $\psi\colon \cD \to \Der(A)$, for which the restriction to the inner derivations is the identity. Furthermore 
\[ N\cong \TT(A,\cD,Y).\]
\end{proposition}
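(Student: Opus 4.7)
The plan is to reverse-engineer the $\TT$-construction directly. The starting observation is that the Lie bracket on $N$ must be $Y$-equivariant, and that $\cD$ is a sum of trivial $Y$-modules while $Y\otimes A$ is a sum of copies of the adjoint representation. Since $Y\otimes Y$ decomposes into a trivial summand (detected by the Killing form $(\cdot,\cdot)$), an adjoint summand (detected by $[\cdot,\cdot]$), and a five-dimensional irreducible summand that has no home in $N$, the bracket on $Y\otimes A$ is forced into the shape
\[
[y\otimes a,\,y'\otimes b]\;=\;(y,y')\,\tilde f(a,b)\;+\;[y,y']\otimes g(a,b),
\]
for uniquely determined linear maps $g\colon A\otimes A\to A$ and $\tilde f\colon A\otimes A\to \cD$; moreover $[d,y\otimes a]$ must equal $y\otimes \psi_d(a)$ for some $\psi\colon\cD\to\End(A)$ by Schur's lemma. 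Super antisymmetry of the Lie bracket, together with the symmetry of $(\cdot,\cdot)$ and the antisymmetry of $[\cdot,\cdot]$, then makes the product $a\cdot b:=g(a,b)$ supercommutative and $\tilde f$ superantisymmetric.

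Next I would extract all remaining algebraic identities from the super Jacobi identity on $N$. Expanding Jacobi on three generic elements of $Y\otimes A$ and separating the trivial and adjoint $Y$-isotypic components (choosing the $y_i$ so that the relevant Killing-form values are non-zero) produces, on one hand, the identification $\psi\circ \tilde f(a,b)=[L_a,L_b]\in\End(A)$, and on the other hand the Jordan identity for $(A,\cdot)$ in the form of \eqref{Eq: inner derivaiton is a derivation}. This shows $A$ is a Jordan superalgebra. Jacobi on one element of $\cD$ and two of $Y\otimes A$ gives, by the same splitting, the derivation property $\psi_d\in\Der(A,\cdot)$ together with $[d,\tilde f(a,b)]=\tilde f(\psi_d(a),b)+(-1)^{|d||a|}\tilde f(a,\psi_d(b))$; Jacobi with two elements of $\cD$ and one of $Y\otimes A$ upgrades $\psi$ to a Lie superalgebra morphism $\cD\to\Der(A)$.

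The identity $\psi\circ\tilde f(a,b)=[L_a,L_b]$, together with the fact that the image of $\tilde f$ in $\cD$ is closed under bracket (by the $[\cD,\tilde f]$-relation in the previous step), realises $\Inn(A)$ as a Lie sub-superalgebra of $\cD$ via $[L_a,L_b]\mapsto \tilde f(a,b)$. By the same identity, $\psi$ restricted to this copy of $\Inn(A)$ is the identity. With these structures in hand, the bracket formulas derived in the first paragraph match, term-for-term, those of the $\TT$-construction in Section~\ref{Section Tits' approach}, so the identity map on $\cD\oplus (Y\otimes A)$ is the desired Lie superalgebra isomorphism $N\cong \TT(A,\cD,Y)$.

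The technical heart is the Jacobi computation in the second paragraph: one must track sign conventions and Killing-form constants carefully so that the adjoint-component identity emerges cleanly as \eqref{Eq: inner derivaiton is a derivation} rather than some weaker consequence, and so that $\psi\circ\tilde f=[L_\cdot,L_\cdot]$ falls out simultaneously. The only other delicate point is the embedding $\Inn(A)\hookrightarrow \cD$ in the third paragraph: strictly, $\tilde f$ descends through $A\otimes A\twoheadrightarrow \Inn(A)$ only modulo the kernel of $\psi$ on its image, and one either assumes this kernel is trivial or replaces $\cD$ by an appropriate quotient, a mild adjustment which does not affect the resulting isomorphism class of $N$.
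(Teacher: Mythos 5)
Your main argument follows essentially the same route as the paper, which simply invokes Proposition~1 of Tits' paper and its proof (asserting it extends verbatim to the super case): decompose the bracket using $Y$-equivariance and Schur's lemma into maps $\tilde f\colon A\otimes A\to\cD$, a product on $A$, and $\psi\colon\cD\to\End(A)$, then harvest the Jordan identity, the derivation property, the morphism property of $\psi$, and the identity $\psi(\tilde f(a,b))=[L_a,L_b]$ (Tits' equation (2.6)) from the graded pieces of the super Jacobi identity. That part of your proposal is a faithful, correctly organised reconstruction of the argument the paper outsources to the literature.

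The problem is your last paragraph, and it is a genuine one rather than a cosmetic adjustment. To conclude $N\cong\TT(A,\cD,Y)$ with the given $\cD$ you need $\Inn(A)$ to sit inside $\cD$ with $\psi$ restricting to the identity, equivalently you need $\tilde f$ to factor through $[L_\cdot,L_\cdot]$; the identity $\psi\circ\tilde f=[L_\cdot,L_\cdot]$ alone does not give this when $\ker\psi$ meets the span of the $\tilde f(a,b)$. Your two remedies do not repair this: assuming the kernel trivial is an additional hypothesis absent from the statement, and replacing $\cD$ by the quotient does change the isomorphism class of $N$, since it amounts to dividing $N$ by a nonzero central ideal. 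Concretely, take $N=\mathfrak{sl}(2|2)$, viewed inside $\mathfrak{gl}(1|1)\otimes M_2(\mK)$, with $Y=I_{1|1}\otimes\mathfrak{sl}_2$ acting adjointly; then $N=\cD\oplus(Y\otimes A)$ with $\cD=\mathfrak{sl}(1|1)\otimes I_2$ trivial and $A=gl(1,1)_{\plus}$, so the hypotheses hold, but $\tilde f(e_{12},e_{21})$ is proportional to the central element $I_{2|2}\neq 0$ while $[L_{e_{12}},L_{e_{21}}]=0$, and $\cD\cong\mathfrak{sl}(1|1)$ contains no copy of the purely odd abelian $\Inn(A)\cong\mathfrak{psl}(1|1)$ on which $\psi$ is the identity (the central extension does not split); quotienting $\cD$ by $\ker\psi$ replaces $N$ by $\mathfrak{psl}(2|2)\not\cong N$. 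So the final identification cannot be made "term-for-term" in general and needs either an extra hypothesis or a weakened conclusion. To be fair, the paper's own proof compresses exactly this step into "comparison with the definition of $\TT(A,\cD,Y)$" after quoting Tits' (2.6), so your write-up is honest in flagging the spot where the argument is thin; but the claimed fix is incorrect, and this is the one place where your proposal does not close the proof.
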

\begin{proof}
Proposition 1 in \cite{Tits} and its proof, which extend trivially to the super case, imply that under these conditions, $\cD$ is a subalgebra of $N$, and there are bilinear maps $\alpha(\cdot,\cdot):\cD\times A\to A$, $\langle\cdot,\cdot\rangle: A\times A\to \cD$ and $\mu: A\times A\to A$, such that
$$[d,y\otimes a]=y\otimes \alpha(d,a)\quad\mbox{ and }\quad [y\otimes a,y'\otimes a']=(y,y')\langle a,a'\rangle +[y,y']\mu(a,a').$$
Furthermore $(A,\mu)$ is a Jordan superalgebra and $d\mapsto\alpha(d,\cdot)$ is a Lie superalgebra morphism $\phi:\cD\to \Der(A)$. Finally, by equation~(2.6) in \cite{Tits}, we have
$$\phi(\langle a,b\rangle) =[L_a,L_b].$$
Comparison with the definition of $\TT(A,\cD,Y)$ concludes the proof.
\end{proof}

\section{Further TKK constructions}\label{SecKoe}

In this section we consider variations of the TKK constructions for a Jordan superalgebra $V$, which also appear in the literature, by using $\str(V )$ and $\Der(V,V)$,
instead of  
$\istr(V )$ and $\Inn(V,V)$.

\subsection{Definition} \label{section definition further TKK}
The Lie superalgebra $\TT(V,\cD,\mathfrak{sl}_2)$ had more freedom compared to the constructions by Kantor and Koecher, due to the choice of $\cD$. Also in the Koecher construction, we can replace $\Inn(V^{\plus},V^{\minus})$ by any Lie superalgebra containing $\Inn(V^{\plus},V^{\minus})$ with a morphism to $\Der(V^{\plus},V^{\minus})$ which restricts to the identity on the inner derivations. For example, we can set $\mg_{0}=\Der(V^{\plus},V^{\minus})$ in the TKK construction of Section~\ref{section TKK for Jordan superpairs}. This gives a $3$-graded Lie superalgebra 
\[\widetilde{\TKK}(V^{\plus},V^{\minus})=V^{\plus} \oplus \Der(V^{\plus},V^{\minus}) \oplus V^{\minus}  ,\] see  \cite{Garcia} for more details. Remark that, by construction, $\TKK(V^{\plus},V^{\minus})$ is an ideal in $\widetilde{\TKK}(V^{\plus},V^{\minus})$. We will again use the notation $\widetilde{\TKK}(V)$ for $\widetilde{\TKK}(V,V)$. In Subsection~\ref{SubOut}, we will prove that $\widetilde{\TKK}(V)$ is the superalgebra of derivations of $\TKK(V)$ for unital Jordan superalgebras. 
We can also relate $\widetilde{\TKK}(V)$ to the Tits' construction in Subsection~\ref{Section Tits' approach} as follows.
\begin{lemma} \label{Lemma connection Tits generalized Koecher} For a unital Jordan superalgebra $V$, we have \[ \widetilde{\TKK}(V)\cong \TT(V,\Der(V),\mathfrak{sl}_{2}(\mK)).\]
\end{lemma}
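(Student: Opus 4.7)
The plan is to mimic the proof of Proposition~\ref{proposition Koecher and Tits are isomorphic}, enlarging the zero graded part of both sides from inner to full structure/derivation algebras. The backbone comes from Proposition~\ref{inner structure is isomorphic to inner derivations}(1), which provides the isomorphism $\phi:\str(V)\to\Der(V,V)$, $L_x+D\mapsto(L_x+D,-L_x+D)$, together with the fact, recorded in Remark~\ref{Remark decomposition structure algebra for unital Jordan algebras}, that for unital $V$ the decomposition $\str(V)=\{L_x\mid x\in V\}\oplus\Der(V)$ is direct.

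Concretely, fixing an $\mK$-basis $e,f,h$ of $\mathfrak{sl}_{2}(\mK)$ with $[e,f]=h$, $[h,e]=2e$, $[h,f]=-2f$, I would define a linear map
\[
\Phi\colon\TT(V,\Der(V),\mathfrak{sl}_{2}(\mK))\;\longrightarrow\;\widetilde{\TKK}(V)=V^{\plus}\oplus\Der(V,V)\oplus V^{\minus}
\]
by
\[
e\otimes a\mapsto a^{\plus},\qquad f\otimes a\mapsto a^{\minus},\qquad h\otimes a\mapsto 2\phi(L_a)=2(L_a,-L_a),\qquad D\mapsto\phi(D)=(D,D)
\]
for $D\in\Der(V)$. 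Bijectivity of $\Phi$ is automatic: the image of $\cD\oplus(\mathfrak{sl}_2\otimes V)$ under $\Phi$ picks out $\phi(\Der(V))\oplus\phi(\{L_x\})\oplus V^{\plus}\oplus V^{\minus}=\Der(V,V)\oplus V^{\plus}\oplus V^{\minus}$ by Proposition~\ref{inner structure is isomorphic to inner derivations}(1) and Remark~\ref{Remark decomposition structure algebra for unital Jordan algebras}.

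The substantial work is checking that $\Phi$ is a Lie superalgebra morphism. I would split the verification by grading. On the diagonal $\cD\times\cD$ it reduces to the fact that $\phi$ restricts to a Lie algebra morphism $\Der(V)\to\Der(V,V)$, which is part of Proposition~\ref{inner structure is isomorphic to inner derivations}. The brackets $[D,y\otimes v]=y\otimes D(v)$ correspond under $\Phi$ to $[(D,D),v^{\pm}]=D(v)^{\pm}$, which is immediate from the definition of the bracket in $\widetilde{\TKK}(V)$. The brackets between two elements in $\mathfrak{sl}_2\otimes V$ are the delicate ones; here one needs the Killing form values $(h,h)=4$, $(e,f)=2$, $(e,e)=(f,f)=(h,e)=(h,f)=0$ to rewrite
\[
[y\otimes v,y'\otimes v']=(y,y')[L_v,L_{v'}]+[y,y']\otimes vv'
\]
and match it with the bracket $[a^{\plus},b^{\minus}]=(D_{a,b},-(-1)^{\abs{a}\abs{b}}D_{b,a})=\phi(2L_{ab}+2[L_a,L_b])$ in $\widetilde{\TKK}(V)$, using $D_{a,b}=2L_{ab}+2[L_a,L_b]$. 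These are the same verifications as in Proposition~\ref{proposition Koecher and Tits are isomorphic}; nothing new is needed since the new summand $\Der(V)$ only contributes through the trivial $\cD$-part.

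The main (mild) obstacle is bookkeeping the sign conventions in the identification $\Der(V,V)\supset\phi(\Der(V))$, $D\mapsto (D,D)$, versus $\phi(\{L_x\})$, $L_a\mapsto(L_a,-L_a)$: the mixed brackets $[D,[a^{\plus},b^{\minus}]]$ in $\widetilde{\TKK}(V)$ expand through~\eqref{Condition derivation}, and one must check that this matches $[D,h\otimes ab+2[L_a,L_b]]$ after applying $\Phi^{-1}$. Alternatively, and more cleanly, I would argue that $\TKK(V)\subset\widetilde{\TKK}(V)$ is an ideal and that the inclusion $\TT(V,\Inn(V),\mathfrak{sl}_2)\hookrightarrow\TT(V,\Der(V),\mathfrak{sl}_2)$ is also an ideal inclusion; then the content of Proposition~\ref{proposition Koecher and Tits are isomorphic} already supplies $\Phi$ on the ideal, and the only thing left is the action of the complementary $\Der(V)$-summand, which is the tautological adjoint action on both sides.
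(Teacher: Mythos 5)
Your proposal is correct and essentially coincides with the paper's argument: the paper proves the statement via the more general Proposition~\ref{Propnu}, which uses exactly your assignment $e\otimes a\mapsto a^{\plus}$, $f\otimes a\mapsto a^{\minus}$, $h\otimes a\mapsto 2L_a$, $D\mapsto D$ to get $\TT(V,\cD,\mathfrak{sl}_2(\mK))\cong\TKK_{\cD}(V)$, and then specialises to $\cD=\Der(V)$ using Remark~\ref{Remark decomposition structure algebra for unital Jordan algebras} and Proposition~\ref{inner structure is isomorphic to inner derivations}(1), i.e.\ $\widetilde{\Der(V)}\cong\str(V)\cong\Der(V,V)$ — precisely the ingredients you invoke, only composed into one direct map. (Your ``cleaner'' alternative at the end is the only shaky point, since $\Der(V)$ is not complementary to the ideal $\TT(V,\Inn(V),\mathfrak{sl}_2)$ — they intersect in $\Inn(V)$ — but your main argument does not rely on it.)
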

This will be proved in greater generality in Subsection~\ref{SubT}.

\subsection{Comparison of further TKK constructions}\label{SubT}

Let $\cD$ be a Lie superalgebra containing $\Inn(V)$ with a morphism $\psi$ to $\Der(V)$ such that $\psi_{|\Inn(V)}=\id$. Define the Lie superalgebra  \[ \widetilde{\cD} :=\cD \oplus \{ L_x \mid x \in V\},
\]
where $\cD$ is a subalgebra of $\widetilde{\cD}$, the product of $L_x$ and $L_y$ is given by $[L_x,L_y]$ interpreted via the embedding of $\Inn(V)$ in $\cD$, and \[
[D,L_x]:=L_{\psi(D)x}\qquad \mbox{for $D\in\cD$, $x\in V$}.
\]
Set \[ \widetilde{\psi}\colon \widetilde{\cD} \to \Der(V,V); \quad D +L_x \mapsto (\psi(D)+L_x, \psi(D)-L_x). \]
From Lemma \ref{Lemma: left multiplication and D are derivations}, it follows that this map is well defined, while from the definition of the bracket on $\widetilde{\cD}$ it follows that it is a Lie superalgebra morphism.
The morphism $\widetilde{\psi}$ yields an action of $\widetilde{\cD}$ on $V^{\plus}$ and $V^{\minus}$, which allows us to define a ${\rm TKK}$ construction similar to the Koecher construction in Subsection~\ref{section TKK for Jordan superpairs}. Concretely, the bracket on 
$$\TKK_{\cD}(V):=V\oplus \widetilde{\cD}\oplus V$$ is given by
\begin{align*}
[x,u]=2L_{xu}+2[L_x,L_u], \quad 
[d,x]=\widetilde{\psi}(d)x, \quad [d,u]=\widetilde{\psi}(d)u, \\ [d_1,d_2]=[d_1,d_2]_{\widetilde{\cD}}, \quad [x,y]=0=[u,v],
\end{align*} for $x,y$ in $V^{\plus}$, $u,v$ in $V^{\minus}$,  $d,d_1,d_2$ in $\widetilde{D}$ and $[\cdot,\cdot]_{\widetilde{\cD}}$ the product in $\widetilde{\cD}$.
\begin{proposition}\label{Propnu} Consider a (not necessarily unital) Jordan superalgebra $V$ and a Lie superalgebra $\cD$ as above. We have an isomorphism of Lie superalgebras  \[ \TT(V, \cD, \mathfrak{sl}_{2}(\mK))\; \cong\; \TKK_{\cD}(V).\]
\end{proposition}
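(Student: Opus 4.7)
The plan is to construct an explicit linear bijection and verify that it respects brackets, essentially lifting the argument from the proof of Proposition~\ref{proposition Koecher and Tits are isomorphic} to arbitrary $\cD$.

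First, I fix the standard basis $\{e,f,h\}$ of $\mathfrak{sl}_2(\mK)$ with $[e,f]=h$, $[h,e]=2e$, $[h,f]=-2f$, for which the Killing form (normalised as in Section~\ref{Section Tits' approach}) satisfies $(e,f)=2$ and $(h,h)=4$, with all other pairings zero. I then define
\[ \Phi\colon \TT(V,\cD,\mathfrak{sl}_2(\mK)) \longrightarrow \TKK_{\cD}(V) \]
by $\Phi(d) = d$ for $d \in \cD$, $\Phi(e\otimes a) = a^{\plus}$, $\Phi(f\otimes a) = a^{\minus}$ and $\Phi(h\otimes a) = 2L_a$. Since $\cD$ is a common direct summand, and the subspaces $e\otimes V$, $f\otimes V$, $h\otimes V$ are respectively mapped bijectively onto $V^{\plus}$, $V^{\minus}$ and $\{L_x \mid x\in V\}\subset \widetilde{\cD}$, the map $\Phi$ is a linear bijection by construction.

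Next, I verify the bracket preservation case by case on generators. Brackets with $d \in \cD$ follow immediately from the Tits formula $[d, y\otimes v] = y\otimes \psi_d(v)$ together with the fact that $\widetilde{\psi}$ restricts to $\psi$ on the $\cD$ summand of $\widetilde{\cD}$. The remaining brackets $[y\otimes a, y'\otimes b]$ decompose into the nine subcases with $y,y'\in\{e,f,h\}$. The central check is
\[ [e\otimes a, f\otimes b] \;=\; 2[L_a,L_b] + h\otimes ab \;\stackrel{\Phi}{\longmapsto}\; 2[L_a,L_b] + 2L_{ab}, \]
which matches $[a^{\plus}, b^{\minus}]$ in $\TKK_{\cD}(V)$. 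Similarly $[h\otimes a, e\otimes b] = 2\,e\otimes ab$ maps to $2(ab)^{\plus}$, which coincides with $[2L_a, b^{\plus}] = 2\,\widetilde{\psi}(L_a)(b^{\plus})$ since $\widetilde{\psi}(L_a)$ acts on $V^{\plus}$ as $L_a$. The diagonal case $[h\otimes a, h\otimes b] = 4[L_a,L_b]$ matches $[2L_a, 2L_b]$ computed via the embedding $\Inn(V)\hookrightarrow \widetilde{\cD}$, while $[e\otimes a, e\otimes b]$ and $[f\otimes a, f\otimes b]$ are zero on both sides.

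The main subtlety is the bookkeeping of super signs in mixed brackets, for example checking that $\Phi([e\otimes a, h\otimes b]) = -2(ab)^{\plus}$ agrees with the corresponding bracket in $\TKK_{\cD}(V)$ via super anti-commutativity of the Lie superbracket. Beyond this, the only input required is Lemma~\ref{Lemma: left multiplication and D are derivations}, which was already used to ensure that $\widetilde{\psi}$ is well-defined; crucially the proof does \emph{not} invoke Proposition~\ref{inner structure is isomorphic to inner derivations}, so no unitality hypothesis is needed on $V$, matching the statement of the proposition.
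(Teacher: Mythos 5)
Your map is exactly the one the paper uses (the paper's proof consists of writing down $e\otimes a\mapsto a^{\plus}$, $f\otimes a\mapsto a^{\minus}$, $h\otimes a\mapsto 2L_a$, $D\mapsto D$ and asserting it is an isomorphism), so your proposal takes essentially the same approach, merely spelling out the bracket verifications and the Killing-form values $(e,f)=2$, $(h,h)=4$ that the paper leaves implicit. The checks you perform are the correct ones and match the definitions of $\widetilde{\cD}$, $\widetilde{\psi}$ and $\TKK_{\cD}(V)$, so the argument is sound.
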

\begin{proof}
The following generalisation of the map used in Proposition~\ref{proposition Koecher and Tits are isomorphic} 
 \begin{align*}
e\otimes a \mapsto a^{\plus},\quad f\otimes a \mapsto a^{\minus}, \quad h\otimes a \mapsto 2L_a, \quad D \mapsto D,
\end{align*}
is an isomorphism between $\TT(V, \cD, \mathfrak{sl}_{2}(\mK))$ and $\TKK_{\cD}(V)$.
\end{proof}
The case $\cD=\Inn(V)$ yields \[ \TT(V,\Inn(V),\mathfrak{sl}_{2}(\mK))\cong \TKK_{\Inn(V)}(V),\]
with $\widetilde{\Inn(V)} =\{ L_x\mid x \in V \} \oplus \Inn(V)$. This is a generalisation of Proposition~\ref{proposition Koecher and Tits are isomorphic} to the non-unital case.

Note that if there exists an $x\in V$, such that $L_x$ is in $\Der(V)$, then  $\widetilde{\Der(V)}$ contains two copies of $L_x$, one in $\Der(V)$ and one in $\{L_x \mid x \in V \}.$ The isomorphism between $\widetilde{\Der(V)}$ and $\Der(V,V)$ maps the first to $(L_x,L_x)$, while the second copy gets mapped to $(L_x,-L_x)$.
For unital Jordan superalgebras we have canonical isomorphisms $\widetilde{\Inn(V)} \cong \istr(V)$ and $\widetilde{\Der(V)}\cong \str(V)$, by Remark \ref{Remark decomposition structure algebra for unital Jordan algebras}, and thus  $\TKK_{\Inn(V)}(V) = \TKK(V)$ and $\TKK_{\Der(V)}(V)= \widetilde{\TKK} (V).$ Hence we find that Proposition~\ref{Propnu} implies Lemma  \ref{Lemma connection Tits generalized Koecher}.

\begin{remark}{\rm Let $\mg$ be an arbitrary $3$-graded Lie superalgebra and set  $(V^{\plus},V^{\minus})= \mathcal{J}(\mg)$. Then we have a morphism of Lie superalgebras  \[ \mg_{0} \to \Der(V^{\plus},V^{\minus}); \; x \mapsto (\ad_x\mid_{\mg_{\plus}}, \ad_x\mid_{\mg_{\minus}}), \]
 and  its kernel $I$
is an ideal in $\mg_{0}$ and by construction even in $\mg$. By definition of $\widetilde{\TKK}(V^{\plus},V^{\minus})$, we have an embedding of $\mg / I $ into $\widetilde{\TKK}(V^{\plus},V^{\minus})$.
If $\mg= \TT\left(V,\cD,\mathfrak{sl}_{2} \right)$ for a unital Jordan superalgebra $V$, then one can easily check that $I=0$ (and thus $\cD \subseteq \Der(V)$) is equivalent with the condition that the only ideal of $\mg$ contained in $\cD$ is the zero ideal.}
\end{remark}

Another ``universality property'' of $\widetilde{\TKK}(V^+,V^-)$ will be discussed in Subsection~\ref{SecTom}.
\subsection{Outer derivations}\label{SubOut}
\begin{definition}[See \cite{Extensions}]
For a Lie superalgebra $\mg$, denote the Lie superalgebra of derivations by $\Der(\mg)$. The inner derivations $\Inn(\mg)=\{\ad_X\,|\, X\in\mg\}$ form an ideal isomorphic to the quotient of $\mg$ by its centre. The Lie superalgebra of outer derivations is $\Out (\mg) = \Der(\mg) / \Inn( \mg)$.

An extension $\mathfrak{e}$ of a Lie superalgebra $\fg$ over a Lie superalgebra $\fh$ is a Lie superalgebra $\mathfrak{e}$ such that the following is a short exact sequence:
\[
0\to \fh \to \mathfrak{e} \to \fg \to 0.
\]
In particular $\fh$ is an ideal in $\mathfrak{e}$. 
\end{definition}
Let $\fh$ be a Lie superalgebra with trivial centre. Then we will freely use the isomorphism between the space of extensions of $\fg$ over $\fh$, and the space of Lie superalgebra morphisms $\mathfrak{g}\to \Out(\fh)$, see e.g. Corollary~8 in \cite{Extensions}.

The main result of this section is the following proposition.
\begin{proposition} \label{Theorem of outer derivations} For a unital Jordan superalgebra $V$, we have \[ \widetilde{\TKK}(V) \cong \Der(\TKK(V)),\] and thus
\[ \widetilde{\TKK}(V) / \TKK(V) \cong  \str(V) / \istr(V) \cong \Out (\TKK(V) ).\]
\end{proposition}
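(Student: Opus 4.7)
The overall plan is to establish the first isomorphism $\widetilde{\TKK}(V) \cong \Der(\TKK(V))$; the second will drop out by quotienting. Write $L := \TKK(V)$ and $\widetilde{L} := \widetilde{\TKK}(V)$. Since $L$ is an ideal of $\widetilde{L}$ (Subsection~\ref{section definition further TKK}), the adjoint action defines a Lie superalgebra morphism
\[\Phi\colon \widetilde{L} \to \Der(L),\qquad z \mapsto \ad_z|_L,\]
and the task is to show $\Phi$ is a bijection.

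For injectivity I would first observe that $h := L_e \in \istr(V)$ is the grading element of the $3$-grading on $L$: under the identification $\istr(V) \cong \Inn(V,V)$ of Proposition~\ref{inner structure is isomorphic to inner derivations}(2), $L_e$ corresponds to $(\id,-\id)$, so $\ad_h$ acts by the grade. Decomposing a central element of $L$ by $\mZ$-degree forces it into $L_0 = \istr(V)$, which acts faithfully on $V$, so $Z(L) = 0$. Consequently, pieces of $\ker\Phi$ lying in $V^{\plus}\oplus V^{\minus}\subset L$ would be central and therefore zero, whereas a piece in $\widetilde{L}_0 \cong \str(V)$ acting trivially on $V^{\plus}$ and $V^{\minus}$ vanishes by injectivity of $\str(V)\cong \Der(V,V)$ in Proposition~\ref{inner structure is isomorphic to inner derivations}(1).

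The main step is surjectivity. Using $h$ as grading element, any $\delta \in \Der(L)$ decomposes as $\delta = \delta_{-2} + \cdots + \delta_{2}$ with each $\delta_i$ a derivation of $\mZ$-degree $i$. The standard identity $[\delta,\ad_h] = \ad_{\delta(h)}$ (no sign, as $h$ is even) together with the tautology $[\delta_i,\ad_h] = -i\,\delta_i$ from the grading yields the key relation
\[i\,\delta_i \;=\; -\ad_{\delta_i(h)}.\]
Since $L$ is concentrated in degrees $-1,0,1$, the values $\delta_{\pm 2}(h)$ lie in $L_{\pm 2} = 0$, forcing $\delta_{\pm 2} = 0$. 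For $i = \pm 1$ we read off $\delta_{\pm 1} = \mp \ad_{\delta_{\pm 1}(h)}$ with $\delta_{\pm 1}(h) \in V^{\pm}$, so both components are inner derivations coming from explicit elements of $\widetilde{L}$. For $i=0$ we obtain $\delta_0(h) \in Z(L) = 0$.

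It then remains to realise $\delta_0$ as $\ad_s$ for some $s \in \str(V) = \widetilde{L}_0$. Expanding the derivation identity on the bracket $[x^{\plus},y^{\minus}] = D_{x,y}$ shows that $(\delta_0|_{V^{\plus}}, \delta_0|_{V^{\minus}})$ is a derivation of the Jordan superpair $(V,V)$; Proposition~\ref{inner structure is isomorphic to inner derivations}(1) produces a unique preimage $s \in \str(V)$, and by construction $\ad_s$ matches $\delta_0$ on $V^{\plus}\oplus V^{\minus}$ as well as on the generators $D_{x,y}$ of $\istr(V)=L_0$. Assembling, $\delta = \Phi\bigl(-\delta_{-1}(h) + s - \delta_1(h)\bigr)$, proving surjectivity. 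Quotienting $\Phi$ by the ideal $L$ and using $\Inn(L) \cong L$ (trivial centre) finally gives
\[\Out(L) \;=\; \Der(L)/\Inn(L) \;\cong\; \widetilde{L}/L \;=\; \str(V)/\istr(V),\]
which is the second claim. The main obstacle is spotting that $L_e$ realises the grading internally and exploiting this through the identity above; once it is in place, the degree-by-degree analysis is nearly mechanical thanks to the structural results of Section~\ref{SecStr}.
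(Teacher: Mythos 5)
Your argument is correct and is essentially the paper's own proof: you use the same map $x\mapsto \ad_x|_{\TKK(V)}$, the same $5$-graded decomposition of $\Der(\TKK(V))$, the unit acting through the grading element $h=L_e=\tfrac12\mD_{e,e}$ to kill degrees $\pm2$ and realise degrees $\pm1$ as inner, and the identification of the degree-$0$ part with $\Der(V,V)\cong\str(V)$ via Proposition~\ref{inner structure is isomorphic to inner derivations}; your identity $i\,\delta_i=-\ad_{\delta_i(h)}$ is a uniform repackaging of the computations in Lemmas~\ref{Der2 = 0} and~\ref{V cong Der1}. The only blemish is a sign slip in the final assembly: by your own relation $\delta_{-1}=\ad_{\delta_{-1}(h)}$ it should read $\delta=\Phi\bigl(\delta_{-1}(h)+s-\delta_{1}(h)\bigr)$.
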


\begin{remark}{\rm
Again the assumption of a multiplicative identity is essential for this proposition. A counterexample of the statement for non-unital Jordan superalgebras is given in Subsection~\ref{sec: example Jordan superalgebra K}.}
\end{remark}

\begin{remark}\label{RemGrad}
{\rm For any $\mZ$-graded Lie superalgebra, the Lie superalgebra $\Der(\fg)\subset \End_{\mK}(\fg)$ is $\mZ$-graded by construction. The endomorphisms in $\Der(\fg)_i$ map elements in $\fg_j$ to elements in~$\fg_{i+j}$. Clearly $\Inn(\fg)$ is then a graded ideal in $\Der(\fg)$, so that $\Out(\fg)$ is also $\mZ$-graded. In particular, when $\mg$ is $3$-graded then $\Der(\mg)$ and $\Out(\mg)$ will be $5$-graded.}
\end{remark}

The following reformulation of Proposition~\ref{Theorem of outer derivations} holds for arbitrary Jordan superpairs and thus {\it a fortiori} also for non-unital Jordan superalgebras. \begin{proposition}\label{proposition Jordan superpairs and outer derivations}
For a Jordan superpair $(V^{\plus},V^{\minus})$, we have
\[
\widetilde{\TKK}(V^{\plus},V^{\minus}) /  \TKK(V^{\plus},V^{\minus})  \cong \Der(V^{\plus},V^{\minus}) / \Inn(V^{\plus},V^{\minus}) \cong \Out( \TKK(V^{\plus},V^{\minus}))_{0}.
\]
In particular, for a (non-unital) Jordan superalgebra $V$ we have that $\widetilde{\TKK}(V)$ is the extension over $\TKK(V)$ of $\Out(\TKK(V))_0$ corresponding to the embedding $\Out(\TKK(V))_0\hookrightarrow \Out(\TKK(V)).$
\end{proposition}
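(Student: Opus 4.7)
The first isomorphism is essentially built into the definitions. As already remarked in Subsection~\ref{section definition further TKK}, $\TKK(V^{\plus},V^{\minus})$ sits as an ideal inside $\widetilde{\TKK}(V^{\plus},V^{\minus})$, and these two Lie superalgebras coincide in every graded component except in degree zero, where the quotient equals $\Der(V^{\plus},V^{\minus})/\Inn(V^{\plus},V^{\minus})$.

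For the second isomorphism my plan is to analyse the restriction map
\[
\Phi\colon \Der(\TKK(V^{\plus},V^{\minus}))_0 \to \Der(V^{\plus},V^{\minus}); \qquad \delta \mapsto (\delta|_{V^{\plus}},\delta|_{V^{\minus}}).
\]
The image lies in $\Der(V^{\plus},V^{\minus})$ because the triple product is recovered as $\{x,y,z\}=[[x,y],z]$ inside $\TKK$, which forces $(\delta|_{V^{\plus}},\delta|_{V^{\minus}})$ to satisfy~\eqref{Condition derivation}. Injectivity is immediate from $\mg_0 = [\mg_{+1},\mg_{-1}] = \Inn(V^{\plus},V^{\minus})$, since a degree-zero derivation is then determined by its restriction to $V^{\plus}\oplus V^{\minus}$. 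For surjectivity I would exploit the ambient algebra: because $\TKK$ is an ideal in $\widetilde{\TKK}$, each $d\in\Der(V^{\plus},V^{\minus})\subset\widetilde{\TKK}$ yields by adjoint action a degree-zero derivation $\ad_d|_{\TKK}$ of $\TKK$, and by the way the brackets of $\widetilde{\TKK}$ are defined this derivation restricts to $(D^{\plus},D^{\minus})$ on $V^{\plus}\oplus V^{\minus}$. Hence $\Phi$ is an isomorphism. To pass to outer derivations I would note that $\Inn(\TKK(V^{\plus},V^{\minus}))_0 = \{\ad_A \mid A \in \mg_0\}$, and that $A\mapsto \ad_A$ is injective because the inclusion $\Inn(V^{\plus},V^{\minus})\subset\End(V^{\plus})\oplus\End(V^{\minus})$ is faithful. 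Under $\Phi$ this subspace matches $\Inn(V^{\plus},V^{\minus})\subset\Der(V^{\plus},V^{\minus})$, so quotienting yields $\Out(\TKK(V^{\plus},V^{\minus}))_0 \cong \Der(V^{\plus},V^{\minus})/\Inn(V^{\plus},V^{\minus})$.

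For the final assertion, I would invoke the classification of extensions recalled from \cite{Extensions}: an extension of $\mg$ by a centreless Lie superalgebra $\fh$ is classified by a Lie superalgebra morphism $\mg \to \Out(\fh)$. Applied to $\fh = \TKK(V)$ and $\mg = \Out(\TKK(V))_0$, the extension $\widetilde{\TKK}(V)$ corresponds to the morphism sending $\bar d \in \Out(\TKK(V))_0$ to the class of $\ad_d|_{\TKK(V)}$ in $\Out(\TKK(V))$, for any lift $d\in\Der(V,V)\subset\widetilde{\TKK}(V)$; by the construction of the second isomorphism above, this class is precisely the image of $\bar d$ under the natural embedding $\Out(\TKK(V))_0 \hookrightarrow \Out(\TKK(V))$. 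The main obstacle will be the surjectivity of $\Phi$; the ambient-algebra argument above finesses what would otherwise be a sign-heavy direct construction of the extended derivation on $\mg_0=\Inn(V^{\plus},V^{\minus})$.
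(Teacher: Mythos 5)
Your proposal is correct and follows essentially the same route as the paper: the paper's Lemma~\ref{Der(V,V) = Der0} establishes $\Der(V^{\plus},V^{\minus})\cong\Der(\TKK(V^{\plus},V^{\minus}))_0$ using exactly your two ingredients (the ideal embedding of $\TKK$ in $\widetilde{\TKK}$ to produce degree-zero derivations, and restriction via $\{x,y,z\}=[[x,y],z]$ for the converse), and the passage to $\Out(\cdot)_0$ uses the same triviality of the centre in degree zero. The only difference is cosmetic: you set up the isomorphism as the restriction map $\Phi$, whereas the paper uses its inverse $x\mapsto\ad_x|_{\TKK}$.
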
 

The rest of the subsection is devoted to the proofs of Propositions \ref{Theorem of outer derivations} and \ref{proposition Jordan superpairs and outer derivations}.


\begin{lemma} \label{Der(V,V) = Der0}
We have a Lie superalgebra isomorphism
\[ \phi: \Der(V^{\plus},V^{\minus}) \;\tilde{\to}\;  \Der(\TKK(V^{\plus},V^{\minus}))_{0},\;\; x\mapsto \ad_x|_{\TKK(V^{\plus},V^{\minus})}.
\]
\end{lemma}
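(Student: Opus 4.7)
The plan is to realise $\phi(\mD)$ concretely as the restriction to $\TKK(V^{\plus},V^{\minus})$ of the inner derivation $\ad_{\mD}$ on the enveloping Lie superalgebra $\widetilde{\TKK}(V^{\plus},V^{\minus})=V^{\plus}\oplus \Der(V^{\plus},V^{\minus}) \oplus V^{\minus}$ introduced in Subsection~\ref{section definition further TKK}. Since $\TKK(V^{\plus},V^{\minus})$ sits in $\widetilde{\TKK}(V^{\plus},V^{\minus})$ as an ideal, the restriction $\ad_{\mD}|_{\TKK(V^{\plus},V^{\minus})}$ is automatically a derivation of $\TKK(V^{\plus},V^{\minus})$, and because $\mD$ lives in the degree-$0$ component of $\widetilde{\TKK}(V^{\plus},V^{\minus})$ this derivation has degree $0$. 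That $\phi$ is a Lie superalgebra homomorphism is then immediate from the identity $\ad_{[\mD,\mD']}=[\ad_{\mD},\ad_{\mD'}]$.

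Injectivity is straightforward: the defining brackets of $\widetilde{\TKK}(V^{\plus},V^{\minus})$ give $\phi(\mD)|_{V^{\sigma}}=D^{\sigma}$, so $\phi(\mD)=0$ forces $D^{\plus}=D^{\minus}=0$ and hence $\mD=0$. For surjectivity, I take an arbitrary degree-$0$ derivation $\delta$ of $\TKK(V^{\plus},V^{\minus})$, set $D^{\sigma}:=\delta|_{V^{\sigma}}$, and must show both that $\mD:=(D^{\plus},D^{\minus})\in\Der(V^{\plus},V^{\minus})$ and that $\delta=\phi(\mD)$. Once the first is known, $\delta$ and $\phi(\mD)$ are two degree-$0$ derivations of $\TKK(V^{\plus},V^{\minus})$ which agree on $V^{\plus}\oplus V^{\minus}$; but $\Inn(V^{\plus},V^{\minus})=[V^{\plus},V^{\minus}]$, so $V^{\plus}\oplus V^{\minus}$ generates $\TKK(V^{\plus},V^{\minus})$ as a Lie superalgebra and the two must coincide.

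The main obstacle is therefore verifying that the pair $(D^{\plus},D^{\minus})$ extracted from $\delta$ genuinely satisfies the defining property of an element of $\Der(V^{\plus},V^{\minus})$. For this I exploit that the Jordan triple product is realised inside $\TKK(V^{\plus},V^{\minus})$ as the iterated bracket $\{x,y,z\}=[[x,y],z]$ for $x,z\in V^{\sigma}$ and $y\in V^{\minus\sigma}$; applying the superderivation rule twice to $\delta$ on this expression reproduces exactly the defining identity of a derivation of the Jordan superpair, equivalently the commutator formula~\eqref{Condition derivation}. No step of the argument invokes unitality or even requires that the superpair comes from a Jordan superalgebra, which is consistent with the use of this lemma in the proof of Proposition~\ref{proposition Jordan superpairs and outer derivations} for arbitrary superpairs.
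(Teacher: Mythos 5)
Your proposal is correct and follows essentially the same route as the paper: well-definedness and injectivity via the embedding of $\Der(V^{\plus},V^{\minus})$ as the degree-zero part of $\widetilde{\TKK}(V^{\plus},V^{\minus})$ acting on the ideal $\TKK(V^{\plus},V^{\minus})$, and surjectivity by restricting a grading-preserving derivation to $V^{\pm}$ and checking the superpair-derivation identity through $\{x,y,z\}=[[x,y],z]$. The only (harmless) difference is at the last step, where the paper verifies $\delta=\ad_{(D^{\plus},D^{\minus})}$ by direct computation on each graded piece, while you conclude it from the fact that $V^{\plus}\oplus V^{\minus}$ generates $\TKK(V^{\plus},V^{\minus})$ since $\Inn(V^{\plus},V^{\minus})=[V^{\plus},V^{\minus}]$.
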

\begin{proof}
As  $\TKK(V^{\plus},V^{\minus})$ is an ideal in $\widetilde{\TKK}(V^{\plus},V^{\minus})$ and $\Der(V^{\plus},V^{\minus})\subset \widetilde{\TKK}(V^{\plus},V^{\minus})$ is the zero component of the $\mZ$-grading, the map $\phi$ is well-defined. By construction it is an injective Lie superalgebra morphism.

Now let $D$ be a $\mZ$-grading preserving derivation of $\TKK(V^{\plus},V^{\minus})$, then $(D|_{V^{\plus}},D|_{V^{\minus}})$ is an element of $\Der(V^{\plus},V^{\minus})$ since, using the definition of the bracket on $\TKK(V^{\plus},V^{\minus})$ in Subsection~\ref{section TKK for Jordan superpairs}, we find
\begin{align*}
D(\{x,y,z\}) = D([[x,y],z]) = [[D(x),y],z]+ (-1)^{\abs{x}\abs{D}}[[x,D(y)],z]+ (-1)^{(\abs{x}+\abs{y})\abs{D}} [[x,y],D(z)] \\
= \{D(x),y,z\}+ (-1)^{\abs{x}\abs{D}}\{x,D(y),z\}+ (-1)^{(\abs{x}+\abs{y})\abs{D}} \{x,y,D(z)\}.
\end{align*} One can check that  \[D= \ad_{(D|_{V^{\plus}},D|_{V^{\minus}})} \text{ and } x=(\ad_x|_{V^{\plus}},\ad_x|_{V^{\minus}}). \] So we have indeed $\Der(V^{\plus},V^{\minus}) \cong\Der(\TKK(V^{\plus},V^{\minus}))_{0}$
as Lie superalgebras.
\end{proof}
Using this lemma, we can immediately prove Proposition~\ref{proposition Jordan superpairs and outer derivations}.
\begin{proof}[Proof of Proposition~\ref{proposition Jordan superpairs and outer derivations}]
The first isomorphism follows immediately from Koecher's construction in Subsection~\ref{section TKK for Jordan superpairs}. Furthermore, since the intersection of the centre of $\TKK(V^{\plus},V^{\minus})$ with $\TKK(V^{\plus},V^{\minus})_0$ is trivial, we have $$\Inn(\TKK(V^{\plus},V^{\minus}))_0\cong \TKK(V^{\plus},V^{\minus})_0 =\Inn(V^{\plus},V^{\minus}).$$ Hence, from Lemma \ref{Der(V,V) = Der0} we conclude that $\Der(V^{\plus},V^{\minus})/ \Inn (V^{\plus},V^{\minus}) \cong \Out(  \TKK(V^{\plus},V^{\minus}))_{0}$.
\end{proof}

To prove Proposition~\ref{Theorem of outer derivations}, we will show that, for a unital Jordan superalgebra $V$, all outer derivations of $\TKK(V)$ are grading preserving for the $3$-grading we consider. This is not true for non-unital algebras, see Subsection~\ref{sec: example Jordan superalgebra K}.
\begin{lemma} \label{Der2 = 0}
For a unital Jordan superalgebra $V$, we have
\[ \Der(\TKK(V))_{-2}  =0 = \Der(\TKK(V))_{2}.
\] 
\end{lemma}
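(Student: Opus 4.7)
The plan is to exploit the fact that in a unital Jordan superalgebra the $\mZ$-grading of $\TKK(V)$ is ``inner'': there is a grading element $h\in\istr(V)$ whose adjoint action recovers the grading, and no nontrivial derivation can shift past the ends of a $3$-grading in the presence of such an element.

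First I would use Remark~\ref{RemGrad}: since $\TKK(V)$ carries the $3$-grading $V^{\minus}\oplus\istr(V)\oplus V^{\plus}$, any $D\in\Der(\TKK(V))_{2}$ must satisfy $D(\istr(V))\subset\TKK(V)_{2}=0$ and $D(V^{\plus})\subset\TKK(V)_{3}=0$, so that $D$ is determined by its restriction $D\colon V^{\minus}\to V^{\plus}$. Dually, an element of $\Der(\TKK(V))_{-2}$ is determined by a map $V^{\plus}\to V^{\minus}$ and vanishes on $\istr(V)\oplus V^{\minus}$. This reduces the lemma to showing that the only such maps compatible with the derivation property are zero.

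Next I would produce the grading element explicitly. Under the identifications of Proposition~\ref{inner structure is isomorphic to inner derivations} and the definition of the bracket in Subsection~\ref{section TKK for Jordan superpairs}, the element $h:=\tfrac{1}{2}\mD_{e,e}\in\istr(V)$ satisfies
\[
[h,x]=\tfrac{1}{2}\{e,e,x\}=x\quad\text{for }x\in V^{\plus},\qquad [h,v]=-\tfrac{1}{2}\{e,e,v\}=-v\quad\text{for }v\in V^{\minus},
\]
as follows directly from \eqref{eqTri} and $e^{2}=e$. So $h$ acts as the grading operator; note crucially that this uses the unit.

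Now I would apply the derivation identity to $h$. For $D\in\Der(\TKK(V))_{2}$ (necessarily even, since the grading jump is even) and any $v\in V^{\minus}$,
\[
D([h,v])\;=\;[D(h),v]+[h,D(v)]\;=\;0+[h,D(v)]\;=\;D(v),
\]
because $D(h)=0$ and $D(v)\in V^{\plus}$. On the other hand $[h,v]=-v$, so the left-hand side is $-D(v)$. Hence $2D(v)=0$, and since $\mathrm{char}\,\mK\neq 2$ we conclude $D=0$. The argument for $\Der(\TKK(V))_{-2}$ is identical after swapping $V^{\plus}$ and $V^{\minus}$ and replacing $h$ by itself (the sign on the two sides is reversed, again forcing $D=0$).

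I do not anticipate a serious obstacle: the only point requiring care is that the grading element $h$ really lies in $\istr(V)$ (not merely $\str(V)$), which is precisely where the hypothesis of unitality is indispensable, and that its adjoint action produces the $\pm 1$ eigenvalues used above.
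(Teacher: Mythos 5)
Your proof is correct and takes essentially the same route as the paper: both exploit that a degree-$\pm 2$ derivation annihilates $\mD_{e,e}\in\TKK(V)_0$ and apply the Leibniz rule to brackets with it (the paper first gets $D(e^{\minus})=0$ and then uses $\mD_{x,e}$, while you use $h=\tfrac{1}{2}\mD_{e,e}$ as a grading element and conclude in one step, a harmless streamlining). One small remark: your parenthetical ``necessarily even'' is not justified, since $\TKK(V)_{\pm1}=V$ contains odd elements so the $\mZ$-degree does not determine parity, but this does not affect the argument because $h$ is even and hence no sign enters the super Leibniz rule.
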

\begin{proof}
Let $D \in \Der(\TKK(V))_{2}$. First remark that $D$ acts trivially on $\TKK(V)_0$ and $\TKK(V)_1$. We will show that it must also acts trivially on $\TKK(V)_{\minus 1}$. To use the definition of $\TKK(V)$ we use the Jordan superpair $(V^{\plus},V^{\minus}):=(V,V)$.
For $x\in V$, we use the notation $x^{\plus}$ and $x^{\minus}$, as in Example~\ref{ExJD}.
We find, using the definition of the bracket on $\TKK(V)$ and the property $D(\TKK(V)_0)=0$, that
\[ D(e^{\minus})= \frac{1}{2}D([e^{\minus}, \mD_{e,e}])=\frac{1}{2}[D(e^{\minus}), \mD_{e,e}]+\frac{1}{2}[e^{\minus}, D(\mD_{e,e})]=\frac{1}{2}[D(e^{\minus}), \mD_{e,e}]= -D(e^{\minus}).\]
Hence $D(e^{\minus})=0$, which then implies that
$$D(x^{\minus})=\frac{1}{2} D([e^{\minus}, \mD_{x,e} ])=\frac{1}{2}[D(e^{\minus}), \mD_{x,e}]+\frac{1}{2}[e^{\minus}, D(\mD_{x,e})]= 0.$$ We conclude that $D=0$ for all $D  \in \Der(\TKK(V))_{2}$.
The proof that $\Der(\TKK(V))_{-2}=0$ is completely similar.
\end{proof}
\begin{lemma} \label{V cong Der1}
For a unital Jordan superalgebra $V$, we have isomorphisms
\begin{align*}
V\;\tilde{\to}\; \Der(\TKK(V))_{1}; \quad x \mapsto \ad_{x^{\plus}} \text{ and }\quad V\; \tilde{\to}\; \Der(\TKK(V))_{\minus 1}; \quad x \mapsto \ad_{x^{\minus}},
\end{align*}
as super vector spaces. 
\end{lemma}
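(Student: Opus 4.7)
The plan is to verify that $x \mapsto \ad_{x^{\plus}}$ (and symmetrically $x \mapsto \ad_{x^{\minus}}$) gives the claimed linear bijection. Well-definedness is automatic since $x^{\plus}$ lies in the degree $+1$ piece of $\TKK(V)$, so $\ad_{x^{\plus}}$ is a derivation of $\mZ$-degree $+1$. For injectivity, I note that $\ad_{x^{\plus}}(e^{\minus}) = [x^{\plus}, e^{\minus}] = \mD_{x,e}$, which under the isomorphism $\Inn(V,V) \cong \istr(V)$ of Proposition~\ref{inner structure is isomorphic to inner derivations} becomes $D_{x,e} = 2L_x$; if this vanishes then the unit forces $x = L_x(e) = 0$.

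The substance lies in surjectivity. Given $D \in \Der(\TKK(V))_{1}$, the crucial first step will be to show that $D(e^{\minus}) \in \Inn(V,V)$ must lie in the special subspace $\{\mD_{x,e} \mid x \in V\}$. The key ingredient is that $\mD_{e,e}$ corresponds to $D_{e,e} = 2\,\id$ in $\istr(V)$ and is therefore central in $\Inn(V,V)$. A direct computation gives $[e^{\minus}, \mD_{e,e}] = 2e^{\minus}$; applying $D$, using centrality to kill $[D(e^{\minus}), \mD_{e,e}]$, and setting $u^{\plus} := D(\mD_{e,e}) \in V^{\plus}$ yields $2D(e^{\minus}) = [e^{\minus}, u^{\plus}] = -\mD_{u,e}$. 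Hence $D(e^{\minus}) = \mD_{x,e}$ for $x := -\tfrac{1}{2}u$.

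Once this $x$ is identified, I set $D' := D - \ad_{x^{\plus}}$. This is still a degree $+1$ derivation, and by construction $D'(e^{\minus}) = 0$; moreover $D'(V^{\plus}) = 0$ trivially because $\TKK(V)_{2} = 0$. To propagate the vanishing, I apply $D'$ first to the identity $\mD_{y,e} = [y^{\plus}, e^{\minus}]$ to get $D'(\mD_{y,e}) = 0$, and then to $y^{\minus} = -\tfrac{1}{2}[\mD_{y,e}, e^{\minus}]$ to get $D'(y^{\minus}) = 0$ for every $y \in V$. Since $V^{\plus} \cup V^{\minus}$ generates $\TKK(V)$ as a Lie superalgebra (because $[V^{\plus}, V^{\minus}]$ already spans $\Inn(V,V)$ via the $\mD_{x,y}$), this forces $D' = 0$, i.e., $D = \ad_{x^{\plus}}$. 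The second isomorphism is obtained by the entirely symmetric argument with the roles of $V^{\plus}$ and $V^{\minus}$ exchanged.

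The main obstacle is the first step of surjectivity --- pinning down $D(e^{\minus})$ to lie in $\{\mD_{x,e} \mid x \in V\}$ rather than in the larger space $\Inn(V,V)$. This is exactly where the unital hypothesis enters essentially, both through the unit $e$ itself and through the central element $\mD_{e,e}$; everything else then cascades mechanically from the derivation property and the generation statement.
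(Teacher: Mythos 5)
Your proof is correct and follows essentially the same route as the paper: the candidate preimage of $D$ is the very same element $-\tfrac12 D(\mD_{e,e})$, obtained from the derivation property together with the distinguished (central) element $\mD_{e,e}$ and the unit. The only cosmetic difference is that you conclude surjectivity by showing $D-\ad_{x^{\plus}}$ annihilates the generators $V^{\plus}\cup V^{\minus}$, whereas the paper checks directly that $D\mapsto -\tfrac12 D(\mD_{e,e})$ and $x\mapsto \ad_{x^{\plus}}$ are mutually inverse on each graded piece.
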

\begin{proof}
Let $x^{\plus}$ be an element of $\TKK(V)_1=V^{\plus}$, then $\ad_{x^{\plus}}\in \Der(\TKK(V))_{1}$. With an element $D$ in $\Der(\TKK(V))_{1}$ we can associate the element $-\frac{1}{2} D(\mD_{e,e}) \in V^+$.  
We will now show that $x^+ \mapsto \ad_{x^{\plus}}$ and $D \mapsto -\frac{1}{2} D(\mD_{e,e})$ are each others inverse. This follows from
\[
-\frac{1}{2}\ad_{x^{\plus}}\left( \mD_{e,e}\right)= -\frac{1}{2}[x^{\plus}, \mD_{e,e}]=x^+
\]
and the following three calculations, for arbitrary $x,y\in V$,
\begin{align*}
-\frac{1}{2}\ad_{D(\mD_{e,e})}(y^{\minus})& = -\frac{1}{2}[D(\mD_{e,e}), y^{\minus}]=-\frac{1}{2}D([\mD_{e,e},y^{\minus}]) +\frac{1}{2}[\mD_{e,e},D(y^{\minus})] = D(y^{\minus}) \\
-\frac{1}{2}\ad_{D(\mD_{e,e}) }(L_x) &= -\frac{1}{2}[D(\mD_{e,e}), L_x]=-\frac{1}{2}D([\mD_{e,e},L_x]) +\frac{1}{2}[\mD_{e,e},D(L_x)] = D(L_x) \\
-\frac{1}{2}\ad_{D(\mD_{e,e}) }([L_x,L_y]) &= -\frac{1}{2}[D(\mD_{e,e}), [L_x,L_y]]=-\frac{1}{2}D([\mD_{e,e},[L_x,L_y]])  +\frac{1}{2}[\mD_{e,e},D([L_x,L_y])]\\ &= D([L_x,L_y]). 
\end{align*}
We conclude that $V\cong \Der(\TKK(V))_{1}$. Similarly $ \TKK(V)_{\minus 1}  \to \Der(\TKK(V))_{\minus 1}; \; x^- \mapsto \ad_{x^{\minus}} $ is an isomorphism with inverse $D \mapsto \frac{1}{2}D(\mD_{e,e})$.
\end{proof}

\begin{proof}[Proof of Proposition~\ref{Theorem of outer derivations}]
Consider the following morphism of Lie superalgebras
\begin{align*}
\widetilde{\TKK}(V)\to \Der(\TKK(V)); \;x \mapsto {\ad_x}|_{\TKK(V)}.	
\end{align*}
Combining Lemmata \ref{Der(V,V) = Der0}, \ref{Der2 = 0} and \ref{V cong Der1}, we see that this is an isomorphism. 
\end{proof}

\subsection{Alternative construction}\label{SecTom}

The construction of $\widetilde{\TKK}(V^{\plus},V^{\minus})$ starting from $\TKK(V^{\plus},V^{\minus})$ in Proposition~\ref{proposition Jordan superpairs and outer derivations} fits into a more general construction.
In \cite[Section 4.1]{Tom}, the authors start from an arbitrary $(2n+1)$-graded Lie superalgebra $\cL= \bigoplus_{i\in \mZ} \cL_i$ (strictly speaking only Lie algebras are considered, but the procedure carries over naturally to the super case). Then \cite[Construction~4.1.2]{Tom} constructs an extension $\overline{\cL}$ over $\cL$, which is again a $(2n+1)$-graded Lie superalgebra which satisfies $\overline{\cL}_i=\cL_i$ if $i\not=0$.

It is not difficult to show that in the case of a $3$-graded Lie superalgebra $\cL$ we have $\overline{\cL_0} = \Der(\cL^{\plus}, \cL^{\minus})$ and hence 
$$\overline{\cL} = \widetilde{\TKK} (\cL^{\plus},\cL^{\minus})\qquad\mbox{with}\quad(\cL^{\plus}, \cL^{\minus}):= \cJ (\cL),$$ the Jordan superpair associated with $\cL$ in Subsection~\ref{section TKK for Jordan superpairs}. In other words,
$$\widetilde{\TKK}(V^{\plus},V^{\minus})\;\cong\;\overline{\TKK(V^{\plus},V^{\minus})}.$$
This reveals a universality principle behind $\widetilde{\TKK}(V^{\plus},V^{\minus})$, as the construction of $\overline{\cL}$ starting from~$\cL$ in~\cite{Tom} does not depend on $\cL_0$.

An interesting consequence of \cite[Lemma~4.1.3]{Tom} is then
$$\Out(\widetilde{\TKK}(V^{\plus},V^{\minus}))\;=\;0,$$
for arbitrary Jordan superpairs $(V^{\plus},V^{\minus})$, so also for arbitrary (unital or non-unital) Jordan superalgebras.

\section{Examples}\label{SecEx}
In this section, we use the results of the previous sections to calculate $\widetilde{\TKK}(V)$ for $V$ any finite dimensional simple Jordan superalgebra over an \emph{ algebraically closed field of characteristic zero.} We assume these conditions on the ground field for the entire section.

\subsection{Unital finite dimensional simple  Jordan superalgebras.} \label{section: Examples of TKK constructions}
A complete list of unital finite dimensional simple  Jordan superalgebras $V$ and the corresponding $\Kan(V)$ is given in \cite{Kac, Cantarini}. This gives us $\TKK(V)$ and $\TT(V,\Inn(V),\mathfrak{sl}_{2})$, by Propositions~\ref{PropKan} and \ref{proposition Koecher and Tits are isomorphic}. For the Jordan superalgebras we use the notation of \cite{Cantarini}, where also the definitions can be found. We introduce our convention for the notation of Lie superalgebras in Appendix A. 
In  \cite[Theorem 5.1.2]{Kac Lie superalgebras} and \cite[Chapter~III, Proposition~3]{Scheunert}, $\Der (\mg)$ is calculated for any simple finite dimensional Lie superalgebra  $\mg$.
Together with Proposition~\ref{Theorem of outer derivations} and Lemma~\ref{Lemma connection Tits generalized Koecher}, this gives us $\widetilde{\TKK}(V)\cong \TT(V,\Der(V),\mathfrak{sl}_{2})$, leading to the following table.

\begin{center}
\begin{tabular}{|c|c|c|c|}
\hline
 $V$ &  $\TKK(V)$ & $\widetilde{\TKK}(V)$ & Remarks\\ 
\hline\hline 
$gl(m,n)_{\plus}$ &  $\mathfrak{sl}(2m|2n)$ &   &$m\not=n$\\ 
\hline 
$gl(m,m)_{\plus}$ & $\mathfrak{psl}(2m|2m)$ & $\mathfrak{pgl}(2m|2m)$ & $m>1$\\ 
\hline
$osp(m, 2n)_{\plus}$& $\mathfrak{osp}(4n|2m)$  &  & $(n,m)\not=(1,0 )$\\ 
\hline 
$(m-3,2n)_{\plus}$ & $\mathfrak{osp}(m|2n)$ & & $m\geq 3$, $(m,2n) \not= (4,0)$ \\ 
\hline 
$p(n)_{\plus}$ & $\mathfrak{spe}(2n)$ &  $\mathfrak{pe}(2n)$ & $n>1$\\ 
\hline 
$q(n)_{\plus}$ & $\mathfrak{psq}(2n)$ & $\mathfrak{pq}(2n)$ & $n>1$\\ 
\hline 
$D_t$  & $D(2,1,t)$  & & $t\not\in \{0, -1\} $\\ 
\hline 
$E$ &  $E(7)$  & &\\ 
\hline 
$F$ &  $F(4)$ &  &\\ 
\hline 
$JP(0,n-3)$   & $H(0,n)=H(n)$ &  $ \mK C \ltimes \widetilde{H}(n)$ & $n\geq 5$\\ 
\hline 
$gl(1,1)_{\plus}$& $\mathfrak{psl}(2|2)$ & $D(2,1,-1)$ &\\ 
\hline 
\end{tabular} 
\end{center}
When $\widetilde{\TKK}(V)$ is isomorphic to $\TKK(V)$, we only wrote it once.

Taking the zero component of the 3-graded algebras in the above table gives us $\istr(V)\cong \Inn(V,V)$ and $\str(V)\cong \Der(V,V)$. These are listed in the following table, where the same restrictions on the indices are assumed as in the previous table.
\begin{center}

\begin{tabular}{|c|c|c|}
\hline
 $V$ & $\istr(V)$ & $\str(V)$ \\ 
\hline\hline 
$gl(m,n)_{\plus}$ & $\mathfrak{sl}(m|n)\oplus \mathfrak{sl}(m|n) \oplus \mK$ &\\ 
\hline 
$gl(m,m)_{\plus}$ &$s(\mathfrak{gl}(m|m)\oplus \mathfrak{gl}(m|m) )/ \langle I_{4m} \rangle $ & $(\mathfrak{gl}(m|m)\oplus \mathfrak{gl}(m|m))/ \langle I_{4m} \rangle$\\ 
\hline
$osp(m, 2n)_{\plus}$& $\mathfrak{gl}(2n|m)$ &    \\ 
\hline 
$(m-3,2n)_{\plus}$ &$\mathfrak{osp}(m-2|2n)\oplus \mK$ & \\ 
\hline 
$p(n)_{\plus}$ & $\mathfrak{sl}(n|n) $ &$\mathfrak{gl}(n|n)$ \\ 
\hline 
$q(n)_{\plus}$ &$s(\mathfrak{q}(n) \oplus \mathfrak{q}(n))/\langle I_{4n}\rangle $ & $(\mathfrak{q}(n) \oplus \mathfrak{q}(n))/\langle I_{4n} \rangle  $\\ 
\hline 
$D_t$ &$\mathfrak{sl}(2|1)\oplus \mK \cong \mathfrak{osp}(2|2)\oplus  \mK$ & \\ 
\hline 
$E$ & $E(6) \oplus \mK$& \\ 
\hline 
$F$ & $\mathfrak{osp}(2|4)\oplus \mK$ &   \\ 
\hline 
$JP(0,n-3)$  &$\widetilde{H}(n-2) \ltimes  \left( \Lambda(n-2) / \langle \xi_1\cdots \xi_{n-2} \rangle  \right)$ &$\mK C \ltimes \left(\widetilde{H}(n-2) \ltimes  \Lambda(n-2)\right)  $ \\ 
\hline 
$gl(1,1)_{\plus}$ & $s(\mathfrak{gl}(1|1) \oplus \mathfrak{gl}(1|1)) / \langle I_4 \rangle $  & $\mathfrak{sl}_{2} \ltimes \istr(gl(1,1)_{\plus}) $ \\ 
\hline 
\end{tabular} 
\end{center}
 Again, if $\str(V)$ is isomorphic to $\istr(V)$, we only wrote it once and the notation is explained in Appendix~A.
 The action of $\mathfrak{sl}_{2} $ on $\istr( gl(1,1)_{\plus})$  is the adjoint action by using the embedding of $\mathfrak{sl}_{2} $ in $D(2,1;-1)$.
The following isomorphisms exist in the list of Jordan superalgebras:
\[
(1,2)_{\plus} \cong D_{1}, \quad D_t \cong D_{t^{-1}}.
\]
Furthermore, also the simple Jordan superalgebras $JP(0,1)$ and $D_{-1}$ appear in the literature, but they are isomorphic to $gl(1,1)_{\plus}$, so they are already included in the table.

\subsection{The non-unital finite dimensional simple Jordan superalgebra.}
\label{sec: example Jordan superalgebra K}
The full list of finite dimensional simple Jordan superalgebras in \cite{Kac, Cantarini} contains only one Jordan superalgebra which is non-unital. In~\cite{Kac} it is denoted by $K$. 
The algebra $K$ is defined as
\[
K = \langle a \rangle \oplus \langle \xi_1, \xi_2 \rangle, \quad \abs{a}=\oa, \; \abs{\xi_1}=\abs{\xi_2}= \ob,
\]
with multiplication satisfying $a^2=a$, $a\xi_i= \frac{1}{2}\xi_i$ and $\xi_1\xi_2=a.$

A straightforward calculation implies \[ \istr (K)=\str(K)= \Inn(K,K)  \cong \mathfrak{sl}(1|2), \text{ and } \Der(K,K) \cong \mathfrak{gl}(1|2). \]
This gives a counterexample to the statement in Proposition~\ref{inner structure is isomorphic to inner derivations}(1) for non-unital Jordan superalgebras. For $K$, the sums in Definitions~\ref{Definition structure algebra} and~\ref{Definition inner structure algebra} are direct.

One also finds
 $$\TKK(K)\cong \mathfrak{psl}(2|2).$$
 By construction, $\widetilde{\TKK}(K)$ is an extension over $\TKK(K)$. As $\istr(K)\cong \Inn(K,K)$, it follows easily that the same is true for $\Kan(K)$. The algebras $\widetilde{\TKK}(K)$ and $\Kan(K)$ can hence be described in terms of $\Out(\TKK(K))\cong \mathfrak{sl}_{2}$:\begin{itemize}
\item $\widetilde\TKK(K)\cong \mathfrak{pgl}(2|2)$ is the extension of $\mK$ over $\TKK(K)$ corresponding to the morphism $\mK\to \mathfrak{sl}_{2}$, where $1\in\mK$ is mapped to a semisimple element of $\mathfrak{sl}_{2}$.
\item $\Kan(K)$ is the extension of $\mK$ over $\TKK(K)$ corresponding to the morphism $\mK\to \mathfrak{sl}_{2}$, where $1\in\mK$ is mapped to a nilpotent element of $\mathfrak{sl}_{2}$.
\end{itemize}
  
In particular we find that
$$\widetilde{\TKK}(K) \not\cong \Der(\TKK(K))\quad\mbox{ and }\quad \Kan(K)\not\cong \TKK(K).$$
This gives counterexamples to the statements in Propositions \ref{Theorem of outer derivations} and \ref{PropKan}, for non-unital Jordan superalgebras. By Remark~\ref{RemTits} and the above, we do have
$$\TT(K,\Inn(K),\mathfrak{sl}_{2})\;\cong\; \TT(K,\Der(K), \mathfrak{sl}_2)\;\cong\; \TKK(K)\;\cong\; \mathfrak{psl}(2|2).$$
For the $3$-grading on $\mathfrak{psl}(2|2)$ corresponding to the interpretation as $\TKK(K)$, the algebra $\fg=\Out(\mathfrak{psl}(2|2)\cong \mathfrak{sl}_{2}$ is $3$-graded where $\fg_i$ has dimension one for $i\in\{-1,0,1\}$. This is in sharp contrast with Lemma \ref{V cong Der1} for the unital case. By Proposition~\ref{proposition Jordan superpairs and outer derivations}, $\widetilde{\TKK}(K)$ is the subalgebra of $\Der(\TKK(K))$ where only the degree $0$ derivations are added to $\TKK(K)$. In the same way, $\Kan(K)$ is the subalgebra of $\Der(\TKK(K))$ where only the degree $1$ derivations are added to $\TKK(K)$.

\appendix

\section{The Lie superalgebras of type $A$, $P$, $Q$ and $H$}

Consider an algebraically closed field $\mathbb{K}$ of characteristic zero.
We quickly review the Lie superalgebras of type $A$, $P$, $Q,$ and $H$, as different notations appear in literature. Our nomenclature is based on \cite{CW}. See also \cite[Theorem 1.11]{CW} for the list of simple finite dimensional Lie superalgebras.

\subsection{Type A}The general linear superalgebra $\mathfrak{gl}(m|n)$ is defined as $\End(\mathbb{K}^{m|n})$, with multiplication given by the super commutator.
Define the supertrace for a matrix $A \in \mathfrak{gl}(m|n)$ as $\text{str} (A) := \sum_i (-1)^{\abs{i}} A_{ii}$, where $\abs{i}=\oa$ for $i\leq m$ and $\abs{i}=\ob$ for $m+1\leq i\leq m+n$.
 The special linear superalgebra is $$\mathfrak{sl}(m|n)=\{A\in \mathfrak{gl} (m|n)\,|\,\, \text{str} (A)=0\},$$
 If $m\not= n$ then $\mathfrak{sl}(m|n)$ is simple.
If $m=n$ then $\langle I_{2n}\rangle$, with $I_{2n}$ the identity matrix, is an ideal in $\mathfrak{sl}(n|n)$ and 
$$\mathfrak{psl}(n|n):=\mathfrak{sl}(n|n)/\langle I_{2n}\rangle$$ is simple for $n>1$.
Similarly, we set  
$$\mathfrak{pgl}(n|n):=\mathfrak{gl}(n|n)/\langle I_{2n}\rangle.$$

\subsection{Type P}The periplectic Lie superalgebra is the subalgebra of $\mathfrak{gl}(n|n)$ defined as
\[
\mathfrak{pe}(n):=\left\{ \left(\begin{array}{cc} a&b\\c&-a^t \end{array}\right) \Big|\ a,b,c \in \mK^{n\times n} \text{ with } b^t=b,\ c^t=-c \right\}.
\]
The special periplectic Lie superalgebra is defined as $$\mathfrak{spe}(n):=\{x\in \mathfrak{p}(n) | \tr(a)=0\}.$$ 
It is simple for $n\geq 3$.

\subsection{Type Q} The queer Lie superalgebra is the subalgebra of $\mathfrak{gl}(n|n)$ defined as $$\mathfrak{q}(n):=\left\{ \left(\begin{array}{cc} a&b\\b&a \end{array}\right)\Big|\ a,b\in \mK^{n\times n} \right\}.$$
Remark that $\text{str}(X)=0$ for all $X \in \mathfrak{q}(n)$. 
 The special queer Lie superalgebra is defined as $$\mathfrak{sq}(n):=\left\{ \left(\begin{array}{cc} a&b\\b&a \end{array}\right)\Big|\ a,b \in \mK^{n\times n}, \tr(b)=0 \right\}.$$ \\
The projective special queer Lie superalgebra is defined as $$\mathfrak{psq}(n):= \mathfrak{sq}(n)/\langle I_{2n}\rangle.$$
It is simple for $n\geq 3$.
We also define the projective queer Lie superalgebra as
\[ \mathfrak{pq}(n):= \mathfrak{q}(n)/\langle I_{2n}\rangle.\]

\subsection{Type H}
Let $\Lambda(n)$ be the exterior algebra generated by $\xi_1, \ldots , \xi_n$. The indeterminates hence satisfy \[
\xi_i \xi_j = -\xi_j \xi_i.
\] 
This is an associative superalgebra where the generators are odd, $\abs{\xi_i}=\ob$. We also consider a compatible $\mZ$-grading, by setting $\deg \xi_i=1$.
Denote by $W(n)$ the algebra of derivations of the associative superalgebra $\Lambda(n)$. The Lie superalgebra $W(n)$ is simple for $n\geq 2$.

On $\Lambda(n)$, we define the following Poisson superbracket
\[
\{ f,g\} :=(-1)^{\abs{f}}\left( \sum_{i=1}^{n-2} \partial_{\xi_i}f \partial_{\xi_i}g + \partial_{\xi_{n-1}}f \partial_{\xi_n}g+\partial_{\xi_{n}}f \partial_{\xi_{n-1}}g\right),
\]
for $f$ and $g$ in $\Lambda(n)$. Then $(\Lambda(n),\{\cdot,\cdot\})$ becomes a Lie superalgebra with ideal $\langle 1 \rangle$. Consider the following Lie superalgebras \[ \widetilde{H}(n) := \Lambda(n)/ \langle 1 \rangle \;\text{ and }\; H(n):= [\widetilde{H}(n), \widetilde{H}(n)].\]
Note that $ \widetilde{H}(n)=H(n)\oplus   \mK \xi_1\cdots \xi_n$ as super vector spaces. 
The Lie superalgebra $H(n)$ is simple for $n \geq 4$.
We can embed $H(n)$ and $\widetilde{H}(n)$ into $W(n)$, using
$ f\mapsto \{f,\cdot\}.$

Consider $C:=\sum_{i=1}^n \xi_i \partial_{\xi_i} \in W(n)$, then $ \mK C \ltimes \widetilde{H}(n)$ is naturally defined as a subalgebra of $W(n)$.

We also define the semidirect product $\widetilde{H}(n-2) \ltimes  \Lambda(n-2)$, where the action of $\widetilde{H}(n-2)$ on $\Lambda(n-2)$ is given by the Poisson superbracket on $\Lambda(n-2)$, while the bracket of $\Lambda(n-2)$ is trivial. We further introduce, $\mK C \ltimes \left(\widetilde{H}(n-2) \ltimes  \Lambda(n-2)\right)  $, where $C$ acts by $[C,f]=(\deg f -2)f$ for $f \in  \widetilde{H}(n-2)$ and by $[C,g]=\deg g $ for $ g \in \Lambda(n-2)$.

\vspace{3mm}
\noindent
{\bf Acknowledgment.}
SB is a PhD Fellow of the Research Foundation - Flanders (FWO). KC is supported by Australian Research Council Discover-Project Grant DP140103239 and a postdoctoral fellowship of the Research Foundation - Flanders (FWO).

The authors thank Hendrik De Bie, Tom De Medts and Erhard Neher for helpful discussions and comments.

\noindent
SB: Department of Mathematical Analysis, Faculty of Engineering and Architecture, Ghent University, Krijgslaan 281, 9000 Gent, Belgium;
E-mail: {\tt Sigiswald.Barbier@UGent.be}

\vspace{2mm}

\noindent
KC: School of Mathematics and Statistics, University of Sydney, NSW 2006, Australia;
E-mail: {\tt kevin.coulembier@sydney.edu.au}

\date{}

\end{document}